\newtheorem{theorem}{Theorem}[section]
\newtheorem{lemma}[theorem]{Lemma}
\newtheorem*{thmA}{Theorem A}
\newtheorem*{thmB}{Theorem B}
\newtheorem*{thmC}{Theorem C}
\newtheorem*{thmD}{Theorem D}
\theoremstyle{remark}
\newtheorem{remark}[theorem]{Remark}
\newtheorem{example}[theorem]{Example}
\numberwithin{equation}{section}
\begin{document}
\title[$p$-nilpotent maximal subgroups in finite groups]{$p$-nilpotent maximal subgroups in finite groups}
    \author[Beltr\'an and Shao]{Antonio Beltr\'an\\
     Departamento de Matem\'aticas\\
      Universitat Jaume I \\
     12071 Castell\'on\\
      Spain\\
     \\Changguo Shao \\
College of Science\\ Nanjing University of Posts and Telecommunications\\
     Nanjing 210023 Yadong\\
      China\\
     }

 \thanks{Antonio Beltr\'an: abeltran@uji.es ORCID ID: https://orcid.org/0000-0001-6570-201X \newline
 \indent Changguo Shao: shaoguozi@163.com ORCID ID: https://orcid.org/0000-0002-3865-0573}

\keywords{$p$-nilpotent groups, maximal subgroups, solvability criterion, prime index subgroups}

\subjclass[2010]{20E28, 20D15, 20D06}

\begin{abstract}
Let $p$ be a prime number and suppose that  every  maximal subgroup of a finite group  is either $p$-nilpotent or has prime index.
Such group  need not be $p$-solvable, and  we study its structure by proving that only one nonabelian simple group of order divisible by $p$,
 which belongs to the family  ${\rm PSL}_n(q)$, can be involved in it.
 For $p=2$, we specify more, and in fact, such simple group must be isomorphic to ${\rm PSL}_2({r^a})$ for certain values of the prime $r$ and the parameter $a$.
\end{abstract}

\maketitle

\section{Introduction}

Throughout this paper, all groups are supposed to be finite, and we follow standard notation (e.g.
 \cite{Robinson}).
\medskip

It is widely acknowledged that the structure of a group can be significantly affected by certain properties of its maximal subgroups.
 For instance, the solvability and structure of minimal non-nilpotent groups \cite[Theorem 9.1.9]{Robinson},
  called Schmidt groups, or the classification of minimal non-solvable groups
   are renowned illustrations of how the structure of a group is influenced or determined by the properties of its maximal subgroups.
   In recent years, the class of Schmidt groups has been extended in several directions, for instance,
   by requiring  all non-nilpotent maximal subgroups of a group to be normal \cite{LG, LPZ}, or to have prime index \cite{YJK, SLT}, respectively.
    The solvability of these groups and further properties are achieved through these generalizations.

\medskip
 Let $p$ be a prime number. Recall that a group is said to be $p$-nilpotent provided that it has a normal Hall
$p'$-subgroup.   N. It\^o  established that  minimal non-$p$-nilpotent groups are indeed Schmidt groups \cite[IV.5.4]{Hup}, and consequently,
are solvable groups.
In particular, groups all whose maximal subgroups are $p$-nilpotent are $p$-solvable, and it turns out that they have $p$-length $1$.
 Our objective in this paper is  to continue extending the class of minimal non-$p$-nilpotent groups
  by exploring the structure of those groups whose maximal subgroups are either $p$-nilpotent or have prime index.
  Of course, such groups need not be $p$-solvable; ${\rm Alt}(5)$ and ${\rm Sym}(5)$ are examples of it for $p=2$; and ${\rm PSL}_3(2)$ and ${\rm PGL}_3(2)$ work for $p=3$.
   Our first result determines the structure of such groups when $p=2$ in the non-solvable case.

\begin{thmA}
Let $G$ be a non-solvable finite group and suppose that every  maximal subgroup of   $G$ is either $2$-nilpotent or has prime index in $G$.
Let  $S(G)$ denote the solvable radical of $G$.  Then $S(G)={\bf O}_{2',2}(G)$ and one of the three  possibilities holds:

\begin{itemize}

\item[(i)]  $G/{\bf O}_{2',2}(G)\cong {\rm Alt}(5)\cong {\rm PSL}_2(5)$  or ${\rm PSL}_2(7)$ or ${\rm PSL}_2(11)$;

\item[(ii)] $G/{\bf O}_{2',2}(G)\cong {\rm Sym}(5)$ or ${\rm PGL}_2(7)$;

\item[(iii)]${\bf O}^2(G)<G$ and  ${\bf O}^{2}(G)/{\bf O}_{2',2}(G)\cong  S\times \ldots \times  S,$
  where $S$ is isomorphic to  ${\rm PSL}_2(p^{2^a})$, for some odd prime $p$ and $a\geq 1$, or for some odd prime $p\equiv \pm 1$ $(mod~ 8)$  and $a=0$.
  \end{itemize}
\end{thmA}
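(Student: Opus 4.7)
The plan is a two-stage reduction followed by a classification using the Classification of Finite Simple Groups (CFSG). A key preliminary observation is that the hypothesis is hereditary on quotients: for $N\trianglelefteq G$, a maximal subgroup of $G/N$ lifts to a maximal subgroup of $G$ containing $N$, and both $2$-nilpotency and the prime index condition descend, so I may pass freely to quotients by solvable normal subgroups.

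To establish $S(G)=\mathbf{O}_{2',2}(G)$, I would set $\bar G=G/\mathbf{O}_{2',2}(G)$; the inclusion $\mathbf{O}_{2',2}(G)\leq S(G)$ is trivial, and for the reverse I would prove $\mathbf{O}_{2'}(\bar G)=1$. If this failed, using It\^o's theorem that any minimal non-$2$-nilpotent group is a Schmidt group, one locates a Schmidt subgroup inside the preimage whose normalizer in $G$ lies in a maximal subgroup that is neither $2$-nilpotent nor of prime index, a contradiction.

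Next I would apply CFSG to $\bar G:=G/S(G)$, whose socle is $N=T_1\times\cdots\times T_k$ with each $T_i$ non-abelian simple and $\bar G\hookrightarrow{\rm Aut}(N)$. The inherited hypothesis severely restricts the admissible $T_i$: alternating groups ${\rm Alt}(n)$ with $n\geq 7$ are excluded (for $n=7$, ${\rm Sym}(5)$ is a non-$2$-nilpotent maximal of index $21$; for $n\geq 8$, the subgroup ${\rm Alt}(n-1)$ fails the dichotomy), while ${\rm Alt}(6)\cong{\rm PSL}_2(9)$ survives and is absorbed into the ${\rm PSL}_2$ family; sporadic and exceptional Lie-type groups are excluded by direct inspection of their maximal-subgroup tables; classical groups of Lie rank $\geq 2$ fail because a proper parabolic is never $2$-nilpotent and has composite index; and the remaining rank-one groups ${\rm Sz}(q)$, ${\rm PSU}_3(q)$, ${}^2G_2(q)$ are eliminated via their Borel subgroups. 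Only $T_i\cong{\rm PSL}_2(q)$ survives, and Dickson's list of maximal subgroups (dihedrals of orders $2(q\pm 1)/d$, Borel $q{:}(q-1)/d$, the exceptional ${\rm Alt}_4$, ${\rm Sym}_4$, ${\rm Alt}_5$ subgroups, and subfield ${\rm PSL}_2(q_0)$) combined with the dichotomy pins down $q\in\{5,7,11\}$ or $q=p^{2^a}$ with $p$ odd and either $a\geq 1$ or $a=0$ and $p\equiv\pm 1\pmod 8$.

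Finally I would determine $\bar G$ itself. When $k=1$, running through almost-simple extensions $T\leq\bar G\leq{\rm Aut}(T)$ against the maximal-subgroup dichotomy yields exactly (i) when $\bar G=T\in\{{\rm Alt}(5),{\rm PSL}_2(7),{\rm PSL}_2(11)\}$, (ii) when $\bar G\in\{{\rm Sym}(5),{\rm PGL}_2(7)\}$, and (iii) when $T={\rm PSL}_2(p^{2^a})$ is extended by a $2$-group of outer automorphisms (for instance by field automorphisms of $2$-power order). When $k\geq 2$, the product-type maximal subgroups $T^{k-1}M$ with $M$ maximal in $T$, together with the stabilizers of single factors in the permutation action on the socle factors, force the image of $\bar G/N$ in the symmetric group on the factors and the induced outer actions on each $T_i$ to consist of $2$-elements; hence $\bar G/N$ is a $2$-group, which translates to $\mathbf{O}^2(G)<G$ and $\mathbf{O}^2(G)/\mathbf{O}_{2',2}(G)\cong T^k$, as in (iii). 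The main obstacle will be the detailed bookkeeping inside ${\rm PSL}_2(q)$: tracking when each family of maximal subgroups flips between $2$-nilpotent, prime-index, or neither as $q$ varies. In particular, the subfield maximal ${\rm PSL}_2(q_0)$ is what forces the exponent $f$ in $q=p^f$ to be a power of $2$, and the appearance of ${\rm Sym}_4$ as a maximal subgroup is what forces $p\equiv\pm 1\pmod 8$ in the exceptional $a=0$ case.
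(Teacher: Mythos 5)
Your overall architecture (pass to $G/S(G)$, identify the socle $N=T_1\times\cdots\times T_k$, classify the admissible $T_i$, then force $\bar G/N$ to be a $2$-group) matches the paper's, but you replace its key external input with a direct CFSG sweep, and several of the exclusion mechanisms you propose would actually fail. The paper does not run through all simple groups: it first observes that every non-solvable maximal subgroup of $G$ is automatically non-$2$-nilpotent, hence of prime index, so $G$ satisfies the hypotheses of Demina--Maslova's theorem \cite[Theorem 1]{DM}; this immediately cuts the candidate composition factors down to ${\rm PSL}_2(2^p)$, ${\rm PSL}_2(3^p)$, ${\rm PSL}_2(p^{2^a})$, ${\rm Sz}(2^p)$ and ${\rm PSL}_3(3)$, which are then eliminated or retained by checking whether Sylow normalizers (pulled into maximal subgroups via the Frattini argument) are $2$-nilpotent. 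Your substitute arguments have concrete errors: (1) ${\rm Alt}(n-1)$ has \emph{prime} index $n$ in ${\rm Alt}(n)$ whenever $n$ is prime, so it does not violate the dichotomy; one must use the intransitive maximal $({\rm Sym}(n-2)\times{\rm Sym}(2))\cap{\rm Alt}(n)$ instead. (2) The claim that a proper parabolic of a rank $\geq 2$ classical group always has composite index is false --- line and hyperplane stabilizers in ${\rm PSL}_n(q)$ have index $(q^n-1)/(q-1)$, which is exactly Guralnick's prime-power case and is prime for ${\rm PSL}_3(2)$, ${\rm PSL}_3(3)$, ${\rm PSL}_5(2)$, etc.; excluding such groups requires other maximal subgroups (the paper kills ${\rm PSL}_3(3)$ via its class of maximal ${\rm Sym}(4)$'s). (3) The Borel subgroups of ${}^2G_2(q)$ and of ${\rm PSU}_3(q)$ with $q$ odd \emph{are} $2$-nilpotent (odd unipotent radical extended by a cyclic torus), so they cannot eliminate those groups. (4) The criterion you give for the borderline ${\rm PSL}_2(p)$ case is reversed: ${\rm Sym}(4)$ is maximal precisely when $p\equiv\pm1\ (\mathrm{mod}\ 8)$, which is the \emph{retained} case; the exclusion of $p\equiv\pm3\ (\mathrm{mod}\ 8)$ comes from the Sylow $2$-normalizer being ${\rm Alt}(4)$.

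Two further gaps. First, for $k\geq 2$ the subgroups $T^{k-1}M$ you invoke are not maximal in $\bar G$ when $\bar G$ permutes the socle factors transitively; the paper instead produces maximal subgroups not containing $N$ by applying the Frattini argument to Sylow normalizers of $N$ (and, for ${\rm PSL}_3(3)$, to a full conjugacy class of products of maximal ${\rm Sym}(4)$'s), which is what legitimately forces the relevant local subgroups to be $2$-nilpotent and forces $\bar G/N$ to be a $2$-group. Second, your argument for $S(G)={\bf O}_{2',2}(G)$ via ``locating a Schmidt subgroup whose normalizer lies in a bad maximal subgroup'' is not substantiated and I do not see how it works; the paper's route is cleaner: since $G/S(G)$ is non-solvable, Huppert's theorem shows not all its maximal subgroups have prime index, so some maximal subgroup of $G$ containing $S(G)$ is $2$-nilpotent, whence $S(G)$ is $2$-nilpotent and lies in ${\bf O}_{2',2}(G)$, the reverse inclusion being Feit--Thompson. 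In short, the plan is salvageable but, as written, the classification stage rests on several exclusion arguments that do not hold.
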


For proving Theorem A, we make use of several results based on the Classification of Finite Simple Groups, more precisely,
 Guralnick's classification of simple groups having  some subgroup of prime power index
 and a more recent and extensive result  concerning the structure of groups whose non-solvable maximal subgroups have prime power index  \cite[Theorem 1]{DM}.
  Likewise, we require information on the subgroup structure and the maximal subgroups of some specific simple groups and some families of simple groups,
  for which we will appeal to different sources \cite{BHR, Con, Dickson, Hup, Liebeck, Liebeck2}.
   We want to mention that the authors determined in \cite{SB} the structure of the non-solvable groups all whose maximal subgroups are $2$-nilpotent or normal.
   This class of groups is, however,  more restrictive, if we take into account that there are no simple groups satisfying such conditions.

\medskip
When $p=2$ the solvable case is, of course, much easier to deal with. We establish the $2$-length of the solvable groups verifying the  hypotheses in Theorem A. In fact, we obtain something else.

 \begin{thmB}
 Let $G$ be a solvable finite group and suppose that every  maximal subgroup of   $G$ is either $2$-nilpotent or has prime index. Then ${\bf O}_{2',2,2',2}(G)=G$.
 \end{thmB}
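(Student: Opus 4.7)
Proof plan. We proceed by minimal counterexample. Let $G$ be a solvable group of minimal order satisfying the hypothesis but with ${\bf O}_{2',2,2',2}(G)\ne G$; equivalently, the $2$-length $\ell_2(G)\ge 3$. The hypothesis is inherited by every quotient $G/N$: a maximal subgroup of $G/N$ has the form $M/N$ for some maximal $M\le G$, and both ``$M$ is $2$-nilpotent'' and ``$|G{:}M|$ prime'' descend to $M/N$. Hence by minimality $\ell_2(G/N)\le 2$ for every $1\ne N\lhd G$. A standard Fitting reduction then gives ${\bf O}_{2'}(G)=1$, so $P:={\bf O}_2(G)=F(G)\ne 1$ with $C_G(P)\le P$. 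The identity $\ell_2(G)=\ell_2(G/P)+1$ combined with $\ell_2(G/P)\le 2$ from minimality forces $\ell_2(G/P)=2$; in particular $\overline G:=G/P$ is not $2$-nilpotent. Moreover $|P|\ge 4$: otherwise $G/C_G(P)\hookrightarrow \mathrm{Aut}(C_2)=1$ gives $G=P=C_2$ and $\ell_2(G)=1$.

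A Frattini argument is set up next. Since $\overline G$ is solvable, non-$2$-nilpotent, with ${\bf O}_2(\overline G)=1$, its Fitting subgroup $\overline L:={\bf O}_{2'}(\overline G)$ is a nontrivial $2'$-group. Let $L$ be its preimage in $G$. By Schur--Zassenhaus $L=P\rtimes H$ for some nontrivial Hall $2'$-subgroup $H$, and the Frattini argument applied to $L\lhd G$ yields $G=P\cdot N_G(H)$. Moreover $C_P(H)<P$, for otherwise $H\le C_G(P)\le P$ would force $H=1$. Hence $N:=N_G(H)$ is a proper subgroup of $G$ of index $|P:C_P(H)|$, a nontrivial power of $2$, and the equality $G=PN$ with $P\cap N=C_P(H)$ gives the key identification
\[
\overline G=G/P\;\cong\;N/C_P(H).
\]

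Now apply the hypothesis. Let $M$ be a maximal subgroup of $G$ with $N\le M$. Then $|G:M|$ is a nontrivial $2$-power, so by hypothesis $M$ is $2$-nilpotent or $|G:M|=2$. In the first case, $N\le M$ is $2$-nilpotent, hence so is its quotient $N/C_P(H)\cong\overline G$, contradicting that $\overline G$ is not $2$-nilpotent. In the remaining case, for any such $M\lhd G$ the subgroup $P_1:=P\cap M$ is an index-$2$ subgroup of $P$ that is normal in $G$ (intersection of two normal subgroups), with $P_1\ne 1$ since $|P|\ge 4$. Applying minimality to $G/P_1$ together with a comparison of upper $2$-series, using ${\bf O}_2(G/P_1)=P/P_1$ and $(G/P_1)/(P/P_1)=\overline G$ with $\ell_2(\overline G)=2$, forces ${\bf O}_{2'}(G/P_1)\ne 1$, and one repeats the Frattini construction in $G/P_1$ with a nontrivial Hall $2'$-complement $J$: either a containing maximal subgroup is $2$-nilpotent, whence $G/P_1$ itself is $2$-nilpotent and so $\ell_2(G)\le 2$; or one descends to a strictly smaller normal $2$-subgroup $P_2<P_1$ of $G$. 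Iterating produces a strictly descending chain $P>P_1>P_2>\cdots$ of normal $2$-subgroups. When the chain reaches $|P_i|=2$, the relevant nontrivial Hall $2'$-complement $J$ of $P_i$ inside a normal subgroup $R=P_iJ$ of $G$ acts trivially on $P_i\cong C_2$ (since $\mathrm{Aut}(C_2)=1$), so $R=J\times P_i$ and $J$, being characteristic in $R$, is normal in $G$. This forces $J\le{\bf O}_{2'}(G)=1$, contradicting $J\ne 1$. Setting up and controlling this descent, and in particular handling the residual index-$2$ case that triggers it, is where the main technical work of the proof lies.
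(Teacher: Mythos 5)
There is a genuine gap, and it sits in the very first line. You declare that ${\bf O}_{2',2,2',2}(G)\ne G$ is ``equivalently'' $\ell_2(G)\ge 3$, and the entire minimal-counterexample machinery is then calibrated against the $2$-length. But the two conditions are not equivalent: ${\bf O}_{2',2,2',2}(G)=G$ asserts that $G$ has a normal series with factors of type $2',2,2',2$ \emph{in that order and ending at $G$}, whereas $\ell_2(G)\le 2$ only forbids a third nontrivial $2$-factor and is perfectly compatible with ${\bf O}_{2',2,2',2}(G)<G$ and $G/{\bf O}_{2',2,2',2}(G)$ a nontrivial $2'$-group. (Compare Theorem D of the paper, whose conclusion for odd $p$ is precisely ${\bf O}_{p',p,p',p,p'}(G)=G$ with an extra $p'$ on top, ``in particular $l_p(G)\le 2$''; Theorem B is claiming strictly more than the $2$-length bound.) Every contradiction you derive in the descent is a contradiction with $\ell_2(G)\ge 3$, so even if the descent were written out in full it would establish only $\ell_2(G)\le 2$ and would say nothing about a counterexample with $\ell_2(G)=2$ and a residual odd-order quotient. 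On top of this, you explicitly defer ``the main technical work'' of setting up and controlling the descent, so the argument is incomplete by your own account; for what it is worth, I checked that the descent you sketch (each $P_{i+1}=P_i\cap M_i$ normal of index $2$ in $P_i$, with ${\bf O}_{2'}(G/P_i)\ne 1$ forced at each stage and the terminal contradiction at $|P_i|=2$ via ${\bf O}_{2'}(G)=1$) does work for the $2$-length statement, but that is the wrong statement.

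The missing idea is also what makes the paper's proof three lines long. Set $K={\bf O}_{2',2,2'}(G)$ and assume ${\bf O}_{2',2}(G)<K<G$ (otherwise the upper series has already stabilized at $G$). Then $K$ is \emph{not} $2$-nilpotent, since a normal $2$-complement of $K$ would be characteristic in $K$, hence normal in $G$, forcing $K\le {\bf O}_{2',2}(G)$. Consequently no maximal subgroup of $G$ containing $K$ can be $2$-nilpotent, so by hypothesis every such maximal subgroup has prime index; by Huppert's theorem $G/K$ is supersolvable, hence $2$-nilpotent, and since ${\bf O}_{2'}(G/K)=1$ by construction, $G/K$ is a $2$-group and ${\bf O}_{2',2,2',2}(G)=G$. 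Your Frattini/descent apparatus never exploits the one structural fact that does all the work here, namely that the term ${\bf O}_{2',2,2'}(G)$ itself fails to be $2$-nilpotent and therefore forces \emph{all} maximal subgroups above it into the prime-index case simultaneously.
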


The case $p$ odd is challenging  due to  the fact that  there are more simple groups  in  Guralnick's classification that must be taken into account.
 Indeed, depending on the value of the odd prime $p$, there exist different simple groups (all isomorphic to certain ${\rm PSL}_n(q)$)
  whose maximal subgroups are either $p$-nilpotent or have prime index. We show several examples of this in Section 4.
   Nevertheless, we  obtain the following restrictions in the structure of the non-$p$-solvable  groups satisfying our conditions.

\begin{thmC}
Let $G$ be a non-$p$-solvable finite group and suppose that every  maximal subgroup of   $G$ is either $p$-nilpotent or has prime index in $G$.
Let  $S_p(G)$ denote the $p$-solvable radical of $G$.  Then

\begin{itemize}
  \item[(i)]
  $G/S_5(G)\cong {\rm PSL}_2(11)$ and $p=5$;
  \item[(ii)] or $G/S_p(G)\cong {\rm PSL}_n(2^d)$, where $n\geq3$ is a prime and $d\geq 1$ is odd such that $(n,2^d-1)=1$,
   and $p$ is a primitive prime divisor of $2^{d(n-1)}-1$ and $\frac{2^{dn}-1}{2^d-1}$ is a prime.
      \end{itemize}
\end{thmC}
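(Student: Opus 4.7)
The plan is to adapt the strategy of Theorem A to general odd $p$: reduce to an almost simple quotient, invoke Guralnick's classification of simple groups possessing a subgroup of prime index, and then perform a case analysis on the surviving candidates.

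First I would pass to $\bar G := G/S_p(G)$. The hypothesis is inherited, since $p$-nilpotence is preserved by quotients and every maximal subgroup of $\bar G$ is the image of a maximal subgroup of $G$ containing $S_p(G)$, so primeness of the index is preserved. Then $\bar G$ is non-$p$-solvable with $S_p(\bar G)=1$, so its socle decomposes as $T_1\times\cdots\times T_k$ with each $T_i$ a nonabelian simple group of order divisible by $p$ (otherwise a $T_i$ would contribute to a nontrivial $p$-solvable normal subgroup). A maximal subgroup of $\bar G$ containing $N_{\bar G}(T_1)\ge T_1$ cannot be $p$-nilpotent, so by hypothesis it has prime index; combining this with the permutation action of $\bar G$ on $\{T_1,\dots,T_k\}$ and the existence of product-type maximal subgroups should force $k=1$. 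Thus $\bar G$ is almost simple with a simple socle $T$.

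Next, I would select a non-$p$-nilpotent maximal subgroup $M$ of $\bar G$ not containing $T$. Such $M$ must exist: otherwise every maximal subgroup of $T$ would be $p$-nilpotent, and It\^o's theorem \cite[IV.5.4]{Hup} would force $T$ itself to be $p$-solvable, contradicting its simplicity. Since $MT=\bar G$, the index $[\bar G:M]=[T:T\cap M]$ is prime, so $T$ has a maximal subgroup of prime index. Guralnick's classification then restricts $T$ to one of: ${\rm Alt}(n)$ with $n$ prime, ${\rm PSL}_2(11)$, $M_{11}$, $M_{23}$, ${\rm PSU}_4(2)$, or ${\rm PSL}_n(q)$ with $(q^n-1)/(q-1)$ prime.

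I would then eliminate every candidate except ${\rm PSL}_2(11)$ (where only $p=5$ turns out to be compatible with the hypothesis) and ${\rm PSL}_n(2^d)$ with the stated conditions. For each ruled-out candidate, the plan is to exhibit a maximal subgroup of $\bar G$ which is neither $p$-nilpotent nor of prime index, relying on the maximal subgroup lists in \cite{BHR, Con, Liebeck, Liebeck2}. For ${\rm PSL}_n(q)$, the parabolic stabilisers of a point or hyperplane furnish the prime-index maximal subgroups, whereas all remaining geometric and almost simple maximal subgroups---other parabolics, stabilisers of direct-sum or tensor decompositions, extension-field and subfield subgroups, classical subgroups and the almost simple $\mathcal{S}$-subgroups---must be $p$-nilpotent. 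This is severely restrictive: via Zsigmondy's theorem it should force $q=2^d$ with $d$ odd, $n$ prime, $(n,2^d-1)=1$, $(2^{dn}-1)/(2^d-1)$ prime, and $p$ a primitive prime divisor of $2^{d(n-1)}-1$.

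The main obstacle is precisely this last step: the variety of maximal subgroups of ${\rm PSL}_n(q)$ makes the case analysis delicate, and extracting the exact primitive-prime-divisor condition on $p$ requires tracking $p$ carefully through the orders of every Levi factor and every Aschbacher-class subgroup. Reducing $q$ to a power of $2$ and ruling out odd $q$ altogether is expected to be the most technical part of the argument.
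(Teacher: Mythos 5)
Your overall architecture matches the paper's: pass to $G/S_p(G)$, reduce to an almost simple group, apply Guralnick's classification, then eliminate candidates by exhibiting maximal subgroups that are neither $p$-nilpotent nor of prime index. However, there is a genuine flaw at the pivotal step where you produce a non-$p$-nilpotent maximal subgroup $M$ of $\bar G$ not containing the socle $T$. You argue: ``otherwise every maximal subgroup of $T$ would be $p$-nilpotent, and It\^o's theorem would force $T$ to be $p$-solvable.'' The inference from ``every maximal subgroup of $\bar G$ not containing $T$ is $p$-nilpotent'' to ``every maximal subgroup of $T$ is $p$-nilpotent'' does not hold: a maximal subgroup $L$ of $T$ need not lie in any maximal subgroup of $\bar G$ avoiding $T$ (it may only lie in maximal subgroups of $\bar G$ that contain $T$). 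The group $M_{10}$ with socle ${\rm Alt}(6)$ is a concrete counterexample for $p=2$: every maximal subgroup of $M_{10}$ other than ${\rm Alt}(6)$ is $2$-nilpotent, yet ${\rm Alt}(6)$ has non-$2$-nilpotent maximal subgroups such as ${\rm Alt}(5)$. Since your argument nowhere uses that $p$ is odd, it would ``prove'' a false statement for $p=2$, so it cannot be correct as written. The paper handles this with a separate lemma: take $P_0\in{\rm Syl}_p(T)$, observe that ${\bf N}_{\bar G}({\bf Z}(J(P_0)))$ is proper (Frattini argument) and lies in a maximal subgroup $M$ with $T\not\leq M$; if $M$ were $p$-nilpotent, the Glauberman--Thompson normal $p$-complement theorem (valid only for odd $p$) would force $T$ to be $p$-nilpotent. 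It\^o's theorem is the right tool only in the quotient step (ruling out that \emph{all} maximal subgroups of $\bar G$ are $p$-nilpotent), not here.

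Two further points. First, your reduction to a simple socle ($k=1$) is asserted rather than proved; the clean route is: uniqueness of the minimal normal subgroup $N$ follows because a $p$-nilpotent maximal subgroup $H$ exists (by Huppert's theorem not all maximal subgroups can have prime index) and supplements $N$, so any second minimal normal subgroup would embed in $H/(N\cap H)$ and be $p$-nilpotent; then $|G:M|=q$ prime with ${\rm core}_G(M)=1$ gives $G\leq{\rm Sym}(q)$, hence $|G|_q=q$, while $q$ divides $|S_i|$ for every $i$, forcing $k=1$. Second, the elimination of the remaining candidates (alternating groups, the Mathieu groups, ${\rm PGL}_2(11)$, and above all ${\rm PSL}_n(q)$ with odd $q$ or even field degree, together with pinning down $p$ as a primitive prime divisor of $s^{n-1}-1$ via the parabolic of type $P_2$, the subgroups ${\rm SL}_{n-1}(s).T$, $(\frac{s^n-1}{s-1}{:}n).T$, ${\rm PSO}_n(s).T$ and ${\rm PSU}_n(s_0).T$) is where most of the work of the paper lies; your proposal correctly identifies the strategy and the relevant references but leaves this entire analysis, including the $n=2$ case (excluded via the normalizer of a Sylow $q$-subgroup), unexecuted.
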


In the $p$-solvable  case, we obtain the following property concerning the $p$-structure.

\begin{thmD}
 Let  $p$ an odd prime and $G$ a $p$-solvable finite group.
 If every  maximal subgroup of $G$ is $p$-nilpotent or has prime index in $G$, then  ${\bf O}_{p',p,p',p,p'}(G)=G$. In particular, $l_p(G)\leq 2$.
 \end{thmD}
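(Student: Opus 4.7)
The plan is to argue by induction on $|G|$, using as key ingredient It\^o's classical theorem \cite[IV.5.4]{Hup}: a minimal non-$p$-nilpotent group is a Schmidt group, and consequently any finite group in which every maximal subgroup is $p$-nilpotent is itself $p$-nilpotent (since every proper subgroup would then be $p$-nilpotent, and subgroups of $p$-nilpotent groups are $p$-nilpotent) and so has $p$-length at most $1$. The hypothesis of Theorem D is preserved under passage to quotients, because $p$-nilpotency descends to quotients and prime index of a maximal subgroup is preserved. Hence by induction applied to $G/\mathbf{O}_{p'}(G)$ I reduce to the situation $\mathbf{O}_{p'}(G)=1$; then $P := \mathbf{O}_p(G) \neq 1$ by $p$-solvability. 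With $\bar G := G/P$, the asserted equality $\mathbf{O}_{p',p,p',p,p'}(G) = G$ becomes $l_p(\bar G) \leq 1$, from which the ``in particular'' clause follows immediately.

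The central step will be to show that every maximal subgroup of $\bar G$ is $p$-nilpotent; granting this, It\^o's theorem gives $l_p(\bar G) \leq 1$ as needed. A maximal subgroup $\bar M$ of $\bar G$ arises from a maximal subgroup $M$ of $G$ containing $P$, and by hypothesis either $M$ is $p$-nilpotent or $[G:M]$ is prime. If $M = K \rtimes S$ is $p$-nilpotent, with $K$ the normal Hall $p'$-subgroup and $S$ a Sylow $p$-subgroup (which must contain the normal $p$-subgroup $P$), then $M/P \cong K \rtimes (S/P)$ is $p$-nilpotent, and this case is clear.

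The hard part is the remaining case, where $[G:M] = q$ is prime but $M$ is not $p$-nilpotent, and one must still prove $M/P$ is $p$-nilpotent. I would split on $q$. If $q \neq p$, then $M$ contains a full Sylow $p$-subgroup of $G$, and I would exploit the hypothesis applied to other maximal subgroups of $G$ containing $P$---whose structure is constrained by the $\bar G$-chief factors above $P$---together with a Frattini-type argument using $P \triangleleft G$, to manufacture the normal $p$-complement of $M/P$. If $q = p$, then the faithful transitive action of $G/M_G$ on the $p$-element coset space $G/M$ embeds $G/M_G$ into $\mathrm{Sym}(p)$; since $G$ is $p$-solvable and $p$ is odd, this image is confined to $\mathrm{AGL}_1(p) = C_p \rtimes C_{p-1}$, so that $M/M_G$ is cyclic of order dividing $p-1$ and, combined with $P \leq M_G$, the structure of $M_G/P$ furnishes the normal $p$-complement of $M/P$. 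This $q = p$ subcase is the technical heart of the proof and is precisely where the oddness of $p$ is essential, which is also reflected in the slightly stronger conclusion $\mathbf{O}_{2',2,2',2}(G) = G$ obtained for $p = 2$ in Theorem B.
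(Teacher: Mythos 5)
Your proposal is a plan rather than a proof, and its central step is both unproven and stronger than what the theorem asserts. After reducing to ${\bf O}_{p'}(G)=1$ and setting $P={\bf O}_p(G)$, you aim to show that \emph{every} maximal subgroup of $G/P$ is $p$-nilpotent, which (since ${\bf O}_p(G/P)=1$) would force $G/P$ to be $p$-nilpotent and hence give ${\bf O}_{p',p,p',p}(G)=G$ with no $p'$-factor on top. That is strictly stronger than the stated conclusion ${\bf O}_{p',p,p',p,p'}(G)=G$, and you offer no evidence that it holds. The only case you actually handle is the trivial one where $M$ itself is $p$-nilpotent; for a non-$p$-nilpotent maximal subgroup $M\supseteq P$ of prime index $q$ you only say you ``would exploit the hypothesis \dots to manufacture the normal $p$-complement of $M/P$,'' and in the subcase $q=p$ the proposed mechanism does not work as described: knowing that $M/M_G$ is cyclic of order dividing $p-1$ and that $M_G/P$ has some structure does not yield $p$-nilpotency of $M/P$, because an extension of a $p$-nilpotent group by a cyclic $p'$-group need not be $p$-nilpotent (e.g.\ ${\rm Sym}(3)$ over $C_3$ for $p=3$). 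A smaller but real slip: your parenthetical claim that a group all of whose maximal subgroups are $p$-nilpotent is itself $p$-nilpotent is false in general (${\rm Sym}(3)$ for $p=3$); it is rescued in your setting only because ${\bf O}_p(G/P)=1$ rules out the Schmidt-group alternative, but that needs to be said.

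The paper's argument is entirely different and avoids all of this. Set $K={\bf O}_{p',p,p'}(G)$ and assume $K<G$ (otherwise $l_p(G)\leq 1$). Then $K$ is not $p$-nilpotent (if it were, its normal Hall $p'$-subgroup would lie in ${\bf O}_{p'}(G)$ and the upper $p'p$-series would stabilize below $G$), so every maximal subgroup containing $K$ fails to be $p$-nilpotent and must have prime index; by Huppert's criterion $G/K$ is supersolvable. Hence $(G/K)/F(G/K)$ is abelian, and since ${\bf O}_{p'}(G/K)=1$ one has $F(G/K)={\bf O}_p(G/K)$, so $G/{\bf O}_{p',p,p',p}(G)$ is an abelian group with trivial ${\bf O}_p$, i.e.\ a $p'$-group. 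This global use of Huppert's theorem on the single quotient $G/K$ is what replaces your subgroup-by-subgroup analysis, and it is precisely why the paper's conclusion tolerates a $p'$-quotient at the top — the quotient $G/{\bf O}_{p',p,p',p}(G)$ is only shown to be an abelian $p'$-group, not trivial. If you want to salvage your approach you must either prove your stronger claim (that $G/{\bf O}_p(G)$ is $p$-nilpotent when ${\bf O}_{p'}(G)=1$), for which you currently have no argument, or weaken the target of your induction to match the actual statement.
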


\section{Preliminaries}

\medskip

The first result that we state, mentioned in the Introduction,  is  Guralnick's classification of nonabelian simple groups having a subgroup of prime power index.
As we said, this is based on the Classification  of Finite Simple Groups and  is crucial for our development.

\begin{lemma} {\normalfont  \cite[Theorem 1]{Gura}} \label{g}
\ Let $G$ be a nonabelian simple group with $H < G$ and
$|G : H| = p^a$, $p$ prime. One of the following holds.

\begin{itemize}

\item[$(a)$] $G={\rm Alt}(n)$, and $H\cong {\rm Alt}(n-1)$, with $n= p^a$.

\item[$(b)$] $G = {\rm PSL}_n(q)$ and $H$ is the stabilizer of a line or hyperplane. Then
$|G: H|= (q^n-1)/(q - 1) = p^a$. (Note that $n$ must be prime).

\item[$(c)$] $G={\rm PSL}_2(11)$ and $H\cong {\rm Alt}(5)$.

\item[$(d)$] $G=M_{23}$ and $H\cong M_{22}$ or $G=M_{11}$ and $H\cong M_{10}$.

\item[$(e)$] $G= {\rm PSU}_4(2)\cong {\rm PSp}_4(3)$ and $H$ is the parabolic subgroup of
index $27$.
\end{itemize}
\end{lemma}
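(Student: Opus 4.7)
The plan is to invoke the Classification of Finite Simple Groups and work through each family of nonabelian simple groups, in each case using the known classification of maximal subgroups to determine which ones can have prime-power index. Since $H \le K \le G$ with $K$ maximal forces $|G:K|$ to still be a prime power, there is no loss in assuming $H$ itself is maximal.

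For $G = {\rm Alt}(n)$, I would appeal to the O'Nan--Scott theorem to split maximal subgroups into intransitive, imprimitive, and primitive types. Intransitive maximals have the form $({\rm Sym}(k) \times {\rm Sym}(n-k)) \cap {\rm Alt}(n)$ with index $\binom{n}{k}$; by Kummer's theorem on the $p$-adic valuation of binomial coefficients, $\binom{n}{k}$ is a prime power only in degenerate cases, forcing $k=1$ (or $n-1$) and hence $n = p^a$, which yields (a). Imprimitive maximals give multinomial indices which are even harder to be prime powers, and primitive maximals have index bounded below exponentially in $n$ while their structure is constrained enough (via Cameron's bounds) to be ruled out by direct calculation in the few small exceptional degrees.

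For $G = {\rm PSL}_n(q)$, the point/hyperplane stabilizer is a maximal parabolic of index $(q^n-1)/(q-1)$. If this equals $p^a$, then $n$ must be prime, since for any proper divisor $d \mid n$ the factorization $(q^n-1)/(q-1) = \bigl((q^n-1)/(q^d-1)\bigr)\cdot\bigl((q^d-1)/(q-1)\bigr)$ into coprime integers $>1$ (for $d>1$) would contradict primality of the only prime divisor. This yields (b). For the other Aschbacher classes of maximal subgroups of classical groups, Zsigmondy's theorem produces primitive prime divisors of $q^i-1$ that force $|G:H|$ to have at least two distinct prime divisors except in very small rank, leaving only the sporadic exceptions ${\rm PSL}_2(11) \supset {\rm Alt}(5)$ of index $11$, and ${\rm PSU}_4(2) \cong {\rm PSp}_4(3)$ with its parabolic of index $27$. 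For exceptional groups of Lie type one uses the Liebeck--Seitz classification: the order formulas contain several cyclotomic factors with independent Zsigmondy primes, so no maximal subgroup has prime-power index. For the sporadic groups, one reads the maximal subgroup tables from the ATLAS directly and finds only the two Mathieu configurations in (d).

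The main obstacle is clearly the classical groups of Lie type: Aschbacher's theorem gives eight geometric classes plus a class $\mathcal{S}$ of almost-simple irreducibles, and ruling out prime-power index in each requires a careful combination of Zsigmondy-type arguments with the explicit orders of subfield, extension-field, tensor-product and classical subgroups, together with a finite check for small $n$ and $q$ where Zsigmondy's theorem fails. The sporadic sweep and the exceptional-type sweep are laborious but essentially mechanical once the maximal subgroup data are in hand; the alternating case is easy. In short, all the genuine difficulty lies in proving the converse direction inside the $\mathrm{PSL}$ family, making sure no unexpected prime-power index appears beyond the parabolic action on projective points.
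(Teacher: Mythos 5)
This lemma is not proved in the paper at all: it is imported verbatim as \cite[Theorem 1]{Gura} (Guralnick, \emph{J.\ Algebra} 81 (1983)), so there is no internal argument to compare against. Your outline does follow the strategy of Guralnick's actual proof --- reduce via CFSG to a family-by-family analysis, use Sylvester--Erd\H{o}s/Kummer-type results on $\binom{n}{k}$ for the alternating groups, Zsigmondy primes for the groups of Lie type, and the ATLAS for the sporadics --- so the route is the right one. But as it stands it is a plan rather than a proof: you explicitly defer the entire Aschbacher-class analysis for classical groups, which is where essentially all the work lies, and the exceptional and sporadic sweeps are asserted rather than carried out.

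Two of the steps you do spell out are not correct as written. First, the reduction ``there is no loss in assuming $H$ is maximal'' loses exactly the information the theorem asserts, namely the isomorphism type of $H$ itself (e.g.\ $H\cong{\rm Alt}(n-1)$, $H$ \emph{equal to} the parabolic); after identifying the maximal overgroup $K$ you must still show $H=K$, which requires checking that $K$ has no proper subgroup whose index in $G$ is a power of the same prime (this works, e.g.\ because in case (b) the relevant $p$ is a primitive prime divisor of $q^n-1$ and hence does not divide $|K|$, but the argument is missing). Second, your proof that $n$ must be prime in case (b) rests on the claim that $(q^n-1)/(q^d-1)$ and $(q^d-1)/(q-1)$ are coprime; they need not be (for $q=4$, $n=9$, $d=3$ both are divisible by $3$). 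The correct argument is different: if $n=rm$ with $r$ prime and $m>1$, then $(q^m-1)/(q-1)>1$ divides $p^a$, so $p\mid q^m-1$, whence $(q^n-1)/(q^m-1)=\sum_{i=0}^{r-1}q^{mi}\equiv r\pmod p$ forces $p=r$; running this over all prime divisors of $n$ gives $n=p^b$ with $b\ge 2$, and then a valuation (lifting-the-exponent) computation shows $(q^n-1)/(q^m-1)=p$ exactly, contradicting $(q^n-1)/(q^m-1)>q^{n-m}\ge 2^{p(p-1)}>p$. Since the statement is used in the paper purely as a black box, the practical recommendation is to cite Guralnick rather than reprove him; if you do want a self-contained proof, the classical-group case analysis must actually be done.
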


The next result, also quoted in the Introduction, extends Guralnick's classification and is essential to prove Theorem A.

\begin{theorem}{\label{2}} \cite[Theorem 1]{DM}  Let $G$ be a non-solvable group in which every non-solvable maximal subgroup has
prime power index. Then:
\begin{itemize}

\item[(i)] the non-abelian composition factors of the group $G$ are pairwise isomorphic and are exhausted
by groups from the following list:
 \begin{itemize}
\item[(1)] ${\rm PSL}_2(2^p)$, where $p$ is a prime,
\item[(2)] ${\rm PSL}_2(3^p)$, where $p$ is a prime,
\item[(3)]  ${\rm PSL}_2(p^{2^a})$, where $p$ is an odd prime and $a \geq 0$,

\item[(4)] ${\rm Sz}(2^p)$, where $p$ is an odd prime,

\item[(5)] ${\rm PSL}_3(3)$;
\end{itemize}
\item[(ii)] for any simple group $S$ from the list in statement $({\rm i})$, there exists a group $G$ such that any
of its non-solvable maximal subgroups has primary index and $Soc(G)\cong S$.
\end{itemize}
\end{theorem}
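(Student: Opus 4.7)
The plan is to use the Classification of Finite Simple Groups together with Lemma \ref{g} as the principal tools, reducing the statement to a case analysis of almost simple groups whose non-solvable maximal subgroups have prime-power index. First, I would observe that the hypothesis passes to every quotient of $G$: any maximal subgroup $M/N$ of $G/N$ lifts to a maximal subgroup $M$ of $G$ of the same index, and $M$ is non-solvable exactly when $M/N$ is. Hence, I can quotient out the solvable radical $S(G)$ and assume $S(G)=1$, so that $G$ embeds into ${\rm Aut}(L)$ with $L={\rm Soc}(G)=T_1\times\cdots\times T_m$ a direct product of non-abelian simple groups. Since ${\rm Out}(T_i)$ is solvable by Schreier's conjecture (a consequence of CFSG), the $T_i$ are exactly the non-abelian composition factors of $G$, and the task reduces to identifying their isomorphism type.

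Next, for a fixed simple factor $T_i$ I would construct a non-solvable maximal subgroup $M$ of $G$ whose intersection with the component $T_i$ is a proper subgroup of prime-power index; Lemma \ref{g} then forces $T_i$ to belong to one of the five Guralnick families: alternating, ${\rm PSL}_n(q)$, ${\rm PSL}_2(11)$, Mathieu $M_{11}$ or $M_{23}$, or ${\rm PSU}_4(2)$. The substantive work is the elimination step: for each such family I would scan the maximal subgroup lattice of the corresponding almost simple group (Dickson's theorem for ${\rm PSL}_2(q)$, \cite{BHR} for other classical groups, \cite{Con} for sporadic and small groups) and test which non-solvable maximals fail to have prime-power index. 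The listed parameter restrictions then emerge: ${\rm PSL}_2(2^p)$, ${\rm PSL}_2(3^p)$, and ${\rm PSL}_2(p^{2^a})$ survive because their non-solvable maximal subgroups (essentially ${\rm Alt}(5)$ and subfield subgroups) meet the index requirement; ${\rm Sz}(2^p)$ and ${\rm PSL}_3(3)$ survive by analogous inspection. Alternating groups ${\rm Alt}(n)$ for $n\ge 7$ are excluded via the non-solvable intransitive subgroup stabilizing a pair, which has composite index; the remaining ${\rm PSL}_n(q)$ are eliminated by producing a parabolic or Aschbacher-class maximal subgroup of non-prime-power index, typically through primitive prime divisor arguments on $q^k-1$.

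To finish part (i), I would show that $m\ge 2$ forces all $T_j$ isomorphic to a common simple group $S$: a wreath-product or diagonal-type maximal subgroup of $G$ must itself satisfy the index hypothesis, which forces the simple factors to match in isomorphism type. For (ii), the proof is constructive: for each simple group $S$ in the list I would exhibit an almost simple $G$ with ${\rm Soc}(G)\cong S$ and verify the hypothesis by direct inspection of the maximal subgroups. The main obstacle is the elimination step for the classical groups: the case analysis across ${\rm PSL}_n(q)$ demands precise index computations involving primitive prime divisors and careful tracking of Aschbacher classes, and it is precisely here that the specific exponents $2^a$, $3^p$, $2^p$ appearing in the final list are pinned down.
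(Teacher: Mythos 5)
First, note that the paper does not prove this statement at all: it is quoted verbatim as an external result of Demina and Maslova \cite[Theorem 1]{DM}, so there is no internal proof to compare against. Judged on its own merits, your reconstruction contains a genuine gap at its central reduction step.

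You claim that for each simple factor $T_i$ one can ``construct a non-solvable maximal subgroup $M$ of $G$ whose intersection with the component $T_i$ is a proper subgroup of prime-power index,'' and then invoke Lemma \ref{g} to force $T_i$ into one of Guralnick's five families. No such $M$ need exist: the hypothesis only constrains \emph{non-solvable} maximal subgroups, and it may happen that every maximal subgroup of $G$ not containing the socle is solvable, in which case the hypothesis is vacuous and Guralnick's theorem gives no information. This is not a corner case --- it is exactly how several families in the target list arise. For instance ${\rm Sz}(2^p)$ has no subgroup of prime-power index whatsoever (it does not appear in Lemma \ref{g}), and ${\rm PSL}_2(3^p)$ for $p$ odd has point stabilizers of index $3^p+1$, which is divisible by $4$ and by an odd prime, hence not a prime power; yet both families appear in the conclusion. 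They are there because they are \emph{minimal simple groups} (all proper subgroups solvable), so the proof requires a second branch invoking Thompson's classification of minimal simple groups (equivalently, of simple $N$-groups), which your outline never mentions. The paper itself flags this dichotomy in the proof of Theorem A: ``we are also including the case in which all maximal subgroups of $G$ are solvable, and this happens exactly when \dots $G$ is a minimal simple group.'' Without that branch your argument cannot produce items (1), (2) and (4) of the list. A secondary, smaller gap: after reducing to $L=T_1\times\cdots\times T_m\le G\le {\rm Aut}(L)$, Schreier's conjecture controls only ${\rm Out}(T_i)$, not the image of $G$ in ${\rm Sym}(m)$ permuting the components, which could a priori contribute further non-abelian composition factors (e.g.\ an alternating group); ruling this out, and showing all $T_j$ are isomorphic when there are several minimal normal subgroups, needs an explicit argument rather than the one-line appeal you give.
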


In the next lemma,  we  detail the structure of the  Sylow normalizers of ${\rm PSL}_2(q)$ with $q$ a prime power  for the  convenience of readers, as it will be  frequently used.

\begin{lemma}\label{lemma2}  Let $G={\rm PSL}_2(q)$, where $q$ is a power of prime $p$ and $d = (2, q + 1)$. Let $r$ be a prime divisor of $|G|$ and $R\in {\rm Syl}_r(G)$.

\begin{itemize}

\item[(1)] If $r=p$, then ${\bf N}_G(R)=R\rtimes C_{\frac{q-1}{d}}$;

\item[(2)] If $2\neq r\mid \frac{q+1}{d}$, then ${\bf N}_G(R)=C_{\frac{q+1}{d}}\rtimes C_2$;

\item[(3)] If $2\neq r\mid \frac{q-1}{d}$, then ${\bf N}_G(R)=C_{\frac{q-1}{d}}\rtimes C_2$;

\item[(4)] Assume $p\neq r=2$.

\begin{itemize}

\item[(4.1)] If $q\equiv\pm1$ $(mod~ 8)$, then ${\bf N}_G(R)=R$;

\item[(4.2)] If $q\equiv\pm3$ $(mod~ 8)$, then ${\bf N}_G(R)={\rm Alt}(4)$.

\end{itemize}
\end{itemize}
\end{lemma}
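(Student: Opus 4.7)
The plan is to handle the four cases separately, relying throughout on Dickson's classical description of the subgroup structure of ${\rm PSL}_2(q)$ (see \cite[II.8.27]{Hup} and \cite{Dickson}). The common strategy is to find a canonical overgroup $M\geq R$ whose structure is known, show that $R$ is characteristic in $M$ so that ${\bf N}_G(M)\leq {\bf N}_G(R)$, and then deduce the reverse inclusion from self-centralizing properties of the relevant tori.

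For (1), $R$ is elementary abelian of order $q$ and is conjugate to the image in $G$ of the upper unitriangular matrices. Its normalizer is the Borel subgroup $B=R\rtimes T$, where $T$ is the image of the split diagonal torus, cyclic of order $(q-1)/d$. Since $B$ is maximal in $G$ and $R$ is its unique Sylow $p$-subgroup, we obtain ${\bf N}_G(R)=B$, as required. For (2) and (3), $r$ is odd and different from $p$; by Dickson's list $R$ is contained in a unique maximal cyclic torus $T$ of $G$, of order $(q+1)/d$ in case (2) and $(q-1)/d$ in case (3). Because $T$ is cyclic, $R$ is characteristic in $T$, so ${\bf N}_G(T)\leq {\bf N}_G(R)$. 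For the converse, these maximal tori are self-centralizing, i.e. ${\bf C}_G(R)=T$, whence ${\bf N}_G(R)$ normalizes ${\bf C}_G(R)=T$. The normalizer ${\bf N}_G(T)$ is dihedral of order $2|T|$ by Dickson, giving the claimed semidirect product.

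For (4) we assume $r=2\neq p$, so $|R|=(q^2-1)_2/2$ and $R$ is dihedral. In (4.1), when $q\equiv \pm 1 \pmod 8$, we have $|R|\geq 8$; one inspects the Dickson list to see that the only proper overgroups of such an $R$ in $G$ are either contained in a Borel/dihedral subgroup of odd index or are among the finitely many exceptional subgroups ${\rm Alt}(4)$, ${\rm Sym}(4)$, ${\rm Alt}(5)$, none of which can properly normalize a dihedral $2$-group of order $\geq 8$. This yields ${\bf N}_G(R)=R$. In (4.2), when $q\equiv \pm 3 \pmod 8$, one computes $(q^2-1)_2=8$, hence $|R|=4$ and $R$ is a Klein four-group; this $R$ is exactly the Sylow $2$-subgroup of the standard ${\rm Alt}(4)$ guaranteed by Dickson, and a direct inspection gives ${\bf N}_G(R)={\rm Alt}(4)$.

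The main obstacle is case (4.1): unlike (2) and (3), self-normalization here does not follow from a torus/centralizer argument, and one must explicitly rule out every exceptional subgroup on Dickson's list that could contain a dihedral Sylow $2$-subgroup of order at least $8$. All other cases reduce essentially to reading off orders once the correct overgroup $M$ has been identified, so the work is concentrated in verifying this one self-normalization claim.
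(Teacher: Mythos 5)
The paper offers no argument at all here: its entire proof is the citation of Dickson's subgroup classification as presented in Huppert, \cite[II.8.27]{Hup}, from which all five statements are read off. Your write-up reconstructs the lemma from that same classification, and cases (1), (2), (3) and (4.2) are correct as you present them (the torus argument in (2)--(3), using that $R$ is characteristic in the cyclic torus $T$ and that ${\bf C}_G(R)=T$ with ${\bf N}_G(T)$ dihedral of order $2|T|$, is exactly the right mechanism).

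There is, however, a genuine gap in the one place you yourself identify as the crux, namely (4.1). Your enumeration of the proper overgroups of a dihedral Sylow $2$-subgroup $R$ of order at least $8$ --- ``contained in a Borel/dihedral subgroup of odd index or among ${\rm Alt}(4)$, ${\rm Sym}(4)$, ${\rm Alt}(5)$'' --- omits the subfield subgroups on Dickson's list, and these really do occur: if $q=q_0^2$ then ${\rm PGL}_2(q_0)\leq {\rm PSL}_2(q)$ has the same $2$-part as $G$ (e.g.\ ${\rm PGL}_2(5)\cong{\rm Sym}(5)$ inside ${\rm PSL}_2(25)$ contains a full $D_8$), and similarly ${\rm PSL}_2(q_0)\leq{\rm PSL}_2(q_0^k)$ for odd $k$ contains a full Sylow $2$-subgroup. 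So the case analysis as stated is incomplete, even though the conclusion ${\bf N}_G(R)=R$ is correct. The clean repair avoids the enumeration altogether: for dihedral $R$ of order $2^n\geq 8$ the group ${\rm Aut}(R)$ is a $2$-group, and ${\bf C}_G(R)={\bf Z}(R)$ (since ${\bf C}_G(R)$ lies in the centralizer of the central involution of $R$, which is dihedral of order $q-\varepsilon$ with $R$ as Sylow $2$-subgroup, and a dihedral $2$-subgroup of order $\geq 8$ centralizes only its own center there); hence ${\bf N}_G(R)/{\bf C}_G(R)$ and therefore ${\bf N}_G(R)$ is a $2$-group containing the Sylow $2$-subgroup $R$, forcing ${\bf N}_G(R)=R$. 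With that substitution your proof is complete.
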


\begin{proof} This follows from \cite[Theorem 2.8.27]{Hup}.
\end{proof}

\section{The case $p=2$}

The purpose of this section is to prove Theorems A and B.

\begin{proof}[Proof of Theorem A]
Let  $\overline{G}:=G/S(G)> 1$. Assume first that every maximal subgroup of $\overline{G}$ is $2$-nilpotent.
Since $\overline{G}$ cannot be $2$-nilpotent because it is non-solvable, then $\overline{G}$ is a minimal non-$2$-nilpotent group.
 But  then, by a theorem of It\^o \cite[Theorem IV.5.4]{Hup}, the minimal non-$2$-nilpotent groups are minimal non-nilpotent groups, so $\overline{G}$ would be solvable, a contradiction.
  Therefore, we can assume that $\overline{G}$ has a maximal subgroup $\overline{M_1}$ that is not $2$-nilpotent, so $M_1$ has prime index in $G$ by hypothesis.
  On the other hand, since $\overline{G}$ is non-solvable, not every maximal subgroup of $\overline{G}$ can have prime index in $\overline{G}$,
   otherwise $\overline{G}$ would be supersolvable by a well-known result of Huppert.
   Then, if $\overline{M_2}$ is one of such subgroups,  again by hypothesis, $M_2$ must be $2$-nilpotent.
   We conclude that a maximal subgroup that
has prime index in $G$ and a $2$-nilpotent maximal subgroup both exist and contain $S(G)$.
In particular, $S(G)$ is $2$-nilpotent, and hence $S(G)$ is  contained in ${\bf O}_{2',2}(G)$.
The converse containment follows by Feit-Thompson Theorem. Thus, ${\bf O}_{2',2}(G)=S(G)$ (possibly trivial).
 Henceforth, without loss of generality, we will assume $S(G)=1$.

\medskip
  We claim that $G$ possesses a unique minimal normal subgroup, say $N$.
  Assume on the contrary that there is another minimal normal subgroup $K$ of $G$. Since $S(G)=1$, then $K$ is non-solvable.
  Now, we  take $M$ to be a $2$-nilpotent maximal subgroup of $G$, which we know that exists.
   The non-solvability of $N$ implies that $G=NM$. As $K\cap N=1$ then
 $$K\cong KN/N\leq G/N\cong M/(N\cap M),$$
 from which we deduce that $K$ is 2-nilpotent, a contradiction. Therefore, the claim is proved.

\medskip

   We can write $N=S_1\times \cdots \times S_n$, where $S_i$ are isomorphic  nonabelian simple groups and $n\geq 1$.
   Let $M$ be a non-solvable maximal subgroup of $G$, which obviously is not $2$-nilpotent. Then the hypothesis implies that $M$ has prime index in $G$.
    In particular, we have that $G$ satisfies the hypotheses of Theorem \ref{2}.
    Observe that we are also including the case in which all maximal subgroups of $G$ are solvable, and this happens exactly when $n=1$ and $S_1=N=G$ is a minimal simple group. We conclude that $S_i$ (for every $i$) is isomorphic to one of the groups,  say $S$,  listed in the thesis of Theorem \ref{2}.  On the other hand, the fact that ${\bf C}_G(N)=1$ leads to $N\leq G\leq {\rm Aut}(N)$. Next, we distinguish two
    possibilities: $N=G$ and $N<G$.
    We will see that the first  possibility gives rise  to case (i), while  the second leads to cases (ii) and (iii) of the theorem.

\medskip
 {\bf Case 1}. Assume $N=G$, or equivalently, $G$ is nonabelian simple.

 \medskip
 Since $G$ has a prime index subgroup,  then we can combine Guralnick's classification and Theorem \ref{2},  and obtain exactly three possibilities: $G\cong {\rm Alt}(5)$, $G\cong {\rm PSL}_2(11)$ or $G\cong {\rm PSL}_2(q)$ with $q$ a prime power such that $(q^2-1)/(q-1)= q+1 $ is a prime power too according to Lemma \ref{g}(2). We distinguish two cases. Assume first that $q$ is odd. Then certainly $q$ is a Mersenne prime (of course, $q>3$) and moreover,  if $q\neq 7$, we have $q\equiv 1$ (mod $8$), and in that case it is well-known that ${\rm PSL}_2(q$) has a maximal subgroup isomorphic to Sym$(4)$, which is not $2$-nilpotent nor has prime index, that is, ${\rm PSL}_2(q)$ does not satisfy the hypothesis. Then the remaining  possibility for $G$ is ${\rm PSL}_2(7)\cong {\rm PSL}_3(2)$. We will discuss this case in the following. Assume now that $q$ is a power of $2$. By  Theorem \ref{2}(1), we know that $q=2^r$ with $r$ prime. However, $2^r +1$ is divisible by $3$ whenever $r$ is odd. Therefore,  $r= 2$ or $2^r+1$ is a power of $3$, and by Catalan conjecture we have $r=3$. It follows that the only possibilities for $G$ are  ${\rm PSL}_2(5)\cong {\rm Alt}(5)$ or ${\rm PSL}_3(2)\cong {\rm PSL}_2(7)$ (which already appeared above).
Thus, in this case we  conclude $G\cong {\rm Alt}(5)$ or ${\rm PSL}_2(7)$ or  ${\rm PSL}_2(11)$ and all these groups meet the hypotheses of the theorem. Hence this gives rise to (i).

 \medskip
{\bf  Case 2}. $N<G$. We distinguish two subcases depending on the $2$-nilpotency of the maximal subgroups that do not contain $N$.

 \medskip
 2.1) Suppose that there exists a maximal subgroup $M$ such that $N\not\leq M$ (which necessarily exists) such that $M$ is not $2$-nilpotent.

  \medskip
 By hypothesis $|G:M|=r$ is prime. The uniqueness of $N$ implies core$_G(M)=1$, so $G\leq {\rm Sym}(r)$.
  Also the maximality of $M$ gives $NM=G$, and hence $|N:N\cap M|=r$.
  Then $r$ is the largest prime dividing $|G|$ (and $|N|$), and $|G|_r=r$. As a consequence $n=1$, that is, $N$ is simple.
   Recall the $N$ is one of the simple groups listed in Theorem \ref{2}, so by applying again Guralnick's classification  we obtain as in case 1 that  $N\cong {\rm Alt}(5)$ or $N\cong {\rm PSL}_2(7)$ or $N\cong {\rm PSL}_2(11)$. Also, we know that $N< G\leq {\rm Aut}(N)$.  Next we discuss  these cases.

 If $N \cong {\rm Alt}(5)$, then $G\cong {\rm Aut}({\rm Alt}(5))\cong {\rm Sym}(5)$, and notice that ${\rm Sym}(5)$ satisfies the hypotheses of the theorem.
 The same happens with $N\cong {\rm PSL}_2(7)$ because  Out$(N)\cong C_2$ and $G\cong {\rm Aut}({\rm PSL}_2(7))\cong {\rm PGL}_2(7)$ also satisfies the hypotheses.
  Finally, if $N\cong {\rm PSL}_2(11)$, then Out$(N)\cong C_2$ and $G\cong {\rm PGL}_2(11)$.
  However, $ {\rm PGL}_2(11)$ has maximal subgroups isomorphic to ${\rm Sym}(4)$, which  do not have prime index, contradicting the hypothesis.
  Thus, this gives case (ii).

 \medskip
2.2)  Suppose that every maximal subgroup of $G$ that does not contain $N$ is $2$-nilpotent.

\medskip
   First we claim that the normalizer in $G$ of every Sylow subgroup of $N$ is $2$-nilpotent and  in particular, the normalizer in $S$ of every Sylow subgroup of $S$ is $2$-nilpotent.
   Indeed, let $P$ be a Sylow $p$-subgroup of $N$ for an arbitrary prime $p$ dividing $|N|$.
   By the  Frattini argument,   $G={\bf N}_G(P)N$, and then there exists a maximal subgroup of $G$ that contains ${\bf N}_G(P)$ and does  not contain $N$.
   Then, our assumption  implies that such maximal subgroup is $2$-nilpotent.
   In particular, ${\bf N}_G(P)$ and ${\bf N}_{N}(P)=\prod{\bf N}_S(P_0)$  where $P_0=P\cap S\in {\rm Syl}_p(S)$, are $2$-nilpotent as well, so the assertion is proved.

\medskip
   In the following, we do a  case-by-case analysis for the distinct possibilities for $S$ appearing in Theorem \ref{2}.

\medskip
 We start our analysis by assuming that $S\cong {\rm  PSL}_2(2^q)$ with $q$ prime, and  consider the prime $2$.
  Lemma \ref{lemma2}(1) asserts that if $P_0\in {\rm Syl}_2(S)$, then ${\bf N}_S(P_0)\cong P_0\rtimes C_{2^q-1}$, which is not $2$-nilpotent,
   contradicting the above property, so this case can be discarded.
   A similar argument works to reject the Suzuki group, ${\rm Sz}(q)$, with $q=2^{2n+1}$,
    because the normalizer of a Sylow $2$-subgroup $P$ is isomorphic to $P\rtimes C_{q-1}$ \cite[Chap XI, Theorem 3.10]{HB}.

 \medskip

   For the group $S\cong {\rm PSL}_3(3)$, observe that all Sylow normalizers are $2$-nilpotent, so the above argument cannot be applied  to discard it.
   However,  we check in \cite{Con} that $S$ has a unique conjugacy class of maximal subgroups of order $24$, isomorphic to ${\rm Sym}(4)$.
   Then the direct product of $n$ such subgroups, each one in a distinct $S_i$,  constitute a conjugacy class of subgroups of $N$.
   Take $H_0\cong {\rm Sym}(4)$ a maximal subgroup  of $S$  and put $H=H_0 \times\ldots\times H_0\leq N$.
   Then the Frattini argument applies to get $G={\bf N}_G(H)N$.
   Indeed, if $g\in G$, since $G$ (transitively) permutes the factors $S_i$, then  $H^g=\prod H_0^g$,
   where each factor belongs to a distinct $S_i$, is maximal in such $S_i$ and isomorphic to ${\rm Sym}(4)$.
    Accordingly, $H^g$ is the direct product of $n$  maximal subgroups  isomorphic to ${\rm Sym}(4)$, and lies in the above-mentioned conjugacy class of subgroups of $N$.
    It follows that $H^g=H^n$ for some $n\in N$, and hence $G={\bf N}_G(H)N$, as wanted.
    Now, if we take a maximal subgroup $M$ of $G$ containing ${\bf N}_G(H)<G$, we clearly have $N\not\leq M$ and thus, by  our assumption, $M$ is $2$-nilpotent.
    Hence ${\bf N}_G(H)$  should be $2$-nilpotent too, contradicting the fact that $H$ is not.
    Then this case can be eliminated.

\medskip
Suppose now that $S\cong {\rm PSL}_2(3^r)$ with $r$ prime. If $r$ is odd, then  $3^r\equiv 3$ (mod $8$).
 Also, if  $P_0\in {\rm Syl}_2(S)$,   then  ${\bf N}_S(P_0)\cong {\rm Alt}(4)$  by Lemma \ref{lemma2}(4.1).
 As ${\rm Alt}(4)$  is not $2$-nilpotent, these cases are excluded too.
  Therefore, the only remaining case within this family is $r=2$, that is, ${\rm PSL}_2(3^2)$, which belongs to  case (3) of Theorem \ref{2}. this will be discussed below.

  \medskip
     Finally, assume that $S\cong {\rm PSL}_2(q^{2^a})$ with $q$ an odd prime and $a\geq 0$.
      The Sylow normalizers of this group are  $2$-nilpotent  for all primes (see again Lemma \ref{lemma2})   except for the prime $2$ and $q^{2^a}\equiv \pm 3$ (mod $8$);
      in that case the normalizers of the Sylow $2$-subgroups of $S$ are  isomorphic to ${\rm Alt}(4)$, which is not $2$-nilpotent.
      But the above  congruence only occurs  when $a=0$ and $q\equiv \pm 3$ (mod  $8$), because if $a\geq 1$, then $q^{2^a}\equiv 1$ (mod $8$) for every odd prime $q$.
        Thus,  in this family of simple groups, only the case $a=0$ and  $q\equiv \pm 3$  (mod  $8$) can be discarded.
      We want to stress that, according to  the list of subgroups of ${\rm PSL}_2(q^{2^a})$ with $a\geq 0$, given in Dickson's book \cite{Dickson}
       (see also \cite[Theorem 2.1]{King} for a specific list of conjugacy classes of subgroups),
        we know that ${\rm PSL}_2(q^{2^a})$ for each $a\geq 1$  does not have any single conjugacy class of subgroups (for every  isomorphic type) of non-$2$-nilpotent subgroups.
          Thus,  a similar argument to that of ${\rm PSL}_3(3)$ cannot be applied here so as to rule out more groups within this family of simple groups.

\medskip
       This conclude our discussion on simple groups, so we have proved that $S$ can only be isomorphic to one of the simple groups appearing in case (iii) of the theorem.

\medskip
 The rest of the proof of this case consists in proving that $G/N$ is a $2$-group. We know that
 $$N< G \leq {\rm Aut}(N)= {\rm Aut}(S)\wr {\rm Sym}(n),$$
 where   ${\rm Aut}(S)\wr {\rm Sym}(n)$ denotes the wreath product.
  Write $A={\rm Aut}(S)$ and  let $A^*$ be the base group of $A\wr {\rm Sym}(n)$.
  In order to prove that $G/N$ is a $2$-group we show that $(G\cap A^*)/N$ and  $G/(G\cap A^*)$  are $2$-groups.
  For the first group , we know that
  $$(G\cap A^*)/N\leq {\rm Out}(S)\times\ldots\times {\rm Out}(S).$$
   As we have proved above that $S\cong {\rm PSL}_2(q^{2^a})$ with $q$ an odd prime, then it is known that Out$(S)$ has order $(2, q^{2^a}-1) 2^a= 2^{a+1}$ \cite{Con},
    so our first assertion follows. It remains to show that $G/(G\cap A^*$) is a $2$-group as well.
      Notice that $$G/(G\cap A^*)\cong A^*G/A^*=(A^*G\cap {\rm Sym}(n))A^*/A^*\cong A^*G\cap {\rm Sym}(n).$$
Suppose that there is a prime $p\neq 2$ such that $p$ divides $|A^*G\cap {\rm Sym}(n)|$ and seek a contradiction.
Let $P_0\in {\rm Syl}_2(S)$, so $P=P_0\times \ldots \times P_0\in {\rm Syl}_2(N)$ and the Frattini argument gives $G={\bf N}_G(P)N$.
 Now, since $|G/(G\cap A^*)|$ is divisible by $p$ and $N\leq G\cap A^*$, then there exists a $p$-element $x\in G\setminus G\cap A^*$ such that $x\in {\bf N}_G(P)$.
 Also, we can write $x= as$, with $a\in A^*$ and $1\neq s\in {\rm Sym}(n)$.
 It is straightforward that $s$ normalizes $P$, and  consequently $a\in {\bf N}_{A^*}(P)={\bf N}_A(P_0)\times \ldots \times {\bf N}_A(P_0)$.
 Since $s$ permutes non-trivially the direct factors of $P$, in particular $x$ does not centralize $P$.
 But by the first paragraph of the proof of this case, we know that ${\bf N}_G(P)$ is $2$-nilpotent, and this forces $x$ to centralize $P$, which is a contradiction.
 This shows that $G/(G\cap A^*)$ is a $2$-group, as wanted.
 Moreover, it follows that ${\bf O}^2(G)=N$, and since we are assuming $N<G$, the proof is finished.
\end{proof}

Before proving Theorem B, we recall that if $p$ is prime and $G$ is a finite  $p$-solvable group,
 then there exists a normal series in $G$ whose factors are $p$-groups or $p'$-groups.
 The minimal number of factors in any such series that are $p$-groups is called the $p$-length of $G$ and is denoted by $l_p(G)$.
 It is not hard to prove that $l_p(G)$ is equal to the number of $p$-groups in the upper $p'p$-series  of $G$ (and in the lower $p'p$-series too) \cite[Theorem 9.1.4]{Robinson}.
   Thus, Theorem B establishes the $2$-length of the groups under study.

\begin{proof}[Proof of Theorem B]
Let $K={\bf O}_{2',2,2'}(G)$ and  assume ${\bf O}_{2',2}(G)<K<G$, otherwise the result is proved.
 If $M$ is any maximal subgroup of $G$ containing $K$, it is clear that $M$ cannot be $2$-nilpotent because $K$ neither is, so by hypothesis $|G:M|$ must be prime.
 Hence, by a well-known theorem of Huppert, it follows that $G/K$ is supersolvable, so in particular, $G/K$ is $2$-nilpotent.
 It follows then that $G/K$ is a $2$-group, so we conclude that ${\bf O}_{2',2,2',2}(G)=G$, as required.
\end{proof}

\begin{example} We show that  the conclusion  ${\bf O}_{2',2,2',2}(G)=G$ in Theorem B is sharp. Let us consider the group $G=C_3 \rtimes {\rm Sym}(4)$, that is,
 the semidirect product of $C_3$ and ${\rm Sym}(4)$ acting via ${\rm Sym}(4)/{\rm Alt}(4)\cong C_2$. In fact, $G={\sf SmallGroup}(72,43)$ taken from the {\sf SmallGroups} Library of GAP \cite{GAP}.
  This group has the following upper $2' 2$-series:
$$1< {\bf O}_{2'}(G)= C_3 < {\bf O}_{2',2}(G)=C_2\times C_6 <{\bf O}_{2',2,2'}(G)=C_3 \times {\rm Alt}(4) <{\bf O}_{2',2,2',2}(G)=G.$$
Moreover, the maximal subgroups of $G$ are  isomorphic to either ${\rm Sym(}4)$,   $C_3 \times {\rm Alt}(4)$, $C_3\rtimes D_4$ or $C_3\rtimes {\rm Sym}(3)$.
All of them have prime index in $G$ except $C_3\rtimes {\rm Sym}(3)$, which is $2$-nilpotent.
\end{example}

\begin{remark}
By employing a well-known theorem due to Hall-Higman \cite[Satz VI.6.6(a)]{Hup},
 an immediate consequence of  Theorem B is that the nilpotency class of the Sylow $2$-subgroups of any group that satisfies the assumptions of the theorem  is at most $2$.
\end{remark}

\section{The case $p$ odd}

We start with an application of the Glauberman-Thompson $p$-nilpotency criterion for an odd prime $p$  \cite[Satz IV.6.2]{Hup}, which is useful for our purposes.
We stress, however, that  the following lemma is not true when $p=2$: the Mathieu group $M_{10}$ with a  minimal normal subgroup isomorphic to ${\rm Alt}(6)$ is an example of this,
 since every maximal subgroup of $M_{10}$ distinct from ${\rm Alt}(6)$ is $2$-nilpotent. This can be checked in \cite{Con}.

\begin{lemma}\label{maximal} Let $G$ be a finite group, $p$ an odd prime and $N$ a  minimal normal subgroup of $G$ that is not $p$-solvable.
 Then there exists a maximal subgroup in $G$ that does not contain $N$ and is not $p$-nilpotent.
\end{lemma}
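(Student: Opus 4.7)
The plan is to apply the Glauberman--Thompson normal $p$-complement theorem (valid since $p$ is odd) and reduce to producing a nonsolvable-type obstruction inside $N$ itself.

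First I would unpack $N$: because $N$ is a minimal normal subgroup that is not $p$-solvable (hence not solvable), we can write $N = S_{1}\times\cdots\times S_{k}$ with each $S_{i}\cong S$ a nonabelian simple group, and the failure of $p$-solvability forces $p\mid |S|$. Pick $Q_{i}\in{\rm Syl}_{p}(S_{i})$ and set $P=Q_{1}\times\cdots\times Q_{k}\in{\rm Syl}_{p}(N)$. Next, I would record the standard fact that for a direct product the Thompson subgroup splits: an abelian subgroup of maximal order in $P$ is forced (via projections) to be of the form $A_{1}\times\cdots\times A_{k}$ with each $A_{i}$ of maximal abelian order in $Q_{i}$, so
\[
 V:=Z(J(P))=Z(J(Q_{1}))\times\cdots\times Z(J(Q_{k})).
\]

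The central step is to show $N_{G}(V)$ is not $p$-nilpotent and is a proper subgroup of $G$ containing none of the $S_{i}$. Since $V$ is characteristic in $P$, the Frattini argument applied to the $G$-conjugation on ${\rm Syl}_{p}(N)$ yields $G=N\cdot N_{G}(P)\subseteq N\cdot N_{G}(V)$, so $G=N\cdot N_{G}(V)$. Because the decomposition of $V$ respects the factors of $N$, we have $N_{N}(V)=N_{S_{1}}(Z(J(Q_{1})))\times\cdots\times N_{S_{k}}(Z(J(Q_{k})))$. For each $i$, the group $S_{i}$ is nonabelian simple with $p\mid|S_{i}|$, hence not $p$-nilpotent; applying the Glauberman--Thompson criterion inside each $S_{i}$ (this is exactly where the hypothesis $p$ odd is used) gives that $N_{S_{i}}(Z(J(Q_{i})))$ is not $p$-nilpotent. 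A direct product of non-$p$-nilpotent groups is not $p$-nilpotent, so $N_{N}(V)$, and a fortiori $N_{G}(V)$, is not $p$-nilpotent.

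Finally I would verify $N\not\leq N_{G}(V)$: if it were, then $V$ would be a normal subgroup of $N$; but $V$ is abelian and nontrivial (since each $Z(J(Q_{i}))\neq 1$), whereas the only normal subgroups of $N=S^{k}$ are products of the $S_{i}$'s, none of which is a nontrivial proper abelian subgroup of $S$. Hence $N_{G}(V)<G$, and we may choose a maximal subgroup $M$ of $G$ with $N_{G}(V)\leq M$. Since $G=N\cdot N_{G}(V)\subseteq NM$, the inclusion $N\leq M$ would give $G=M$, a contradiction; so $N\not\leq M$. And since $p$-nilpotency is inherited by subgroups, $M\supseteq N_{G}(V)$ forces $M$ to be non-$p$-nilpotent, as required. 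The main obstacle, and the place where care is needed, is the direct-product decompositions of $J(P)$, $Z(J(P))$, and $N_{N}(V)$, and verifying that the abelian group $V$ cannot be normal in $N$; everything else is a straightforward Frattini-plus-Glauberman--Thompson bookkeeping.
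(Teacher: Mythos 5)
Your proof is correct and follows essentially the same route as the paper: Frattini argument on a Sylow $p$-subgroup $P$ of $N$, the characteristic subgroup ${\bf Z}(J(P))$, and the Glauberman--Thompson normal $p$-complement criterion. The only difference is that the paper applies Glauberman--Thompson to $N$ itself (which is not $p$-nilpotent, being a product of nonabelian simple groups of order divisible by $p$), so the factorwise splitting of $J(P)$, ${\bf Z}(J(P))$ and of the normalizer that you carefully verify is harmless but not actually needed.
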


\begin{proof}
Notice that $N$ must be a direct product of isomorphic nonabelian simple groups. Let $P_0$ be a Sylow $p$-subgroup of $N$, which is non-trivial for $N$ being non-$p$-solvable.
 Let $J(P_0)\neq 1$ be the Thompson subgroup of $P_0$ (see  \cite[IV.6.1.]{Hup} for a definition). Clearly ${\bf Z}(J(P_0))$ is not normal in $G$ since cannot be normal in $N$,
  and accordingly, ${\bf N}_G({\bf Z}(J(P_0))) $ is a  proper
 subgroup of $G$ lying in some maximal subgroup, $M$, of $G$.
  Now, if $M$ is $p$-nilpotent,  so is ${\bf N}_N({\bf Z}(J(P_0)))$, and by  Glauberman-Thompson's criterion, we would have that $N$ is $p$-nilpotent too, a contradiction.
   Hence, $M$ is not $p$-nilpotent. On the other hand, the Frattini argument gives ${\bf N}_G(P_0)N=G$.
    Moreover, since ${\bf Z}(J(P_0))$ is characteristic in $P_0$, we have
 $$ {\bf N}_G(P_0)\leq {\bf N}_G({\bf Z}(J(P_0)))\leq M,$$
 so we conclude that $M$ does not contain $N$ either.
 \end{proof}

A feature of finite group theory is that many theorems on finite groups can only be
proved by reducing them to a check that some property holds for all or certain finite simple groups.
This is what we do through the following theorem,  by analyzing first almost simple groups that meet the hypotheses of Theorem C.

\begin{theorem} \label{almostsimple}
Let $p$ be an odd prime, $S$ be a finite nonabelian simple group of order divisible by $p$ and $S \leq G \leq {\rm Aut}(S)$.
 Assume that every maximal subgroup of $G$ is either $p$-nilpotent or has prime index. Then  either
  \begin{itemize}
  \item[(1)]
  $G\cong {\rm PSL}_2(11)$ and $p=5$;

   \item[(2)] or $G\cong {\rm PSL}_n(2^d)$, where $n\geq3$ is a prime and $d\geq 1$ is odd such that $(n,2^d-1)=1$,
    and $p$ is a primitive prime divisor of $2^{d(n-1)}-1$ and $\frac{2^{dn}-1}{2^d-1}$ is a prime.
  \end{itemize}
\end{theorem}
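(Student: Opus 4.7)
The strategy is to combine Lemma \ref{maximal} with Guralnick's classification (Lemma \ref{g}) and then perform a case-by-case inspection of each candidate simple group. Since $G$ is almost simple with socle $S$, $S$ is the unique minimal normal subgroup of $G$, and the hypothesis $p \mid |S|$ makes $S$ not $p$-solvable. Applying Lemma \ref{maximal} with $N = S$ produces a maximal subgroup $M_{0} \leq G$ with $S \not\leq M_{0}$ and $M_{0}$ not $p$-nilpotent, and the hypothesis then forces $|G : M_{0}|$ to be a prime $r$. Maximality gives $G = S M_{0}$, whence $|S : S \cap M_{0}| = r$ as well. Thus $S$ is a nonabelian simple group with a subgroup of prime index $r$, which restricts $S$ to one of the five families of Lemma \ref{g}. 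Note that family $(e)$ is ruled out immediately since its only prime-power index is $27$, and in family $(a)$ we must have $n = r$.

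We next rule out the families that do not appear in conclusions (1) or (2). The guiding observation throughout is that any maximal subgroup of order coprime to $p$ is trivially $p$-nilpotent, so only those of order divisible by $p$ require genuine checking. For $S \cong \mathrm{Alt}(r)$ with $r \geq 5$ prime, standard maximal subgroups (a dihedral Sylow-normalizer, an imprimitive wreath subgroup, or a transitive almost simple subgroup such as $\mathrm{PSL}_2(r)$) supply, for each odd $p \mid |\mathrm{Alt}(r)|$, a maximal subgroup that is not $p$-nilpotent and of composite index. The Mathieu groups $M_{11}$ and $M_{23}$ from case $(d)$ are dispatched similarly: inspection of \cite{Con} shows that $M_{11}$ has a maximal $\mathrm{PSL}_2(11)$ of non-prime index $12$ (simple, hence not $p$-nilpotent for any prime dividing its order), while $M_{23}$ contains a maximal $\mathrm{PSL}_3(4).2$ and a Frobenius $C_{23} \rtimes C_{11}$ which between them cover every odd prime divisor of $|M_{23}|$. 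Case $(c)$, $S \cong \mathrm{PSL}_2(11)$, yields conclusion (1): verification on its three maximal classes (the copies of $\mathrm{Alt}(5)$ of prime index $11$; the Borel $F_{55} = C_{11} \rtimes C_5$, which is $5$-nilpotent via its normal Sylow $11$-subgroup; and $D_{12}$, whose order is coprime to $5$) shows that the hypothesis is met precisely when $p = 5$; the extension $G = \mathrm{PGL}_2(11)$ is then eliminated by its maximal $D_{20}$ (not $5$-nilpotent, composite index $66$), so $G = S$ here.

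The main case is family $(b)$: $S \cong \mathrm{PSL}_n(q)$ with $n$ prime and $(q^n - 1)/(q - 1) = r$ prime. For $n = 2$, the requirement $q+1 = r$ prime forces $q$ to be a power of $2$ of Fermat type, with $q \geq 4$; in each such $\mathrm{PSL}_2(q)$, Lemma \ref{lemma2} and Dickson's subgroup list locate a maximal dihedral subgroup $D_{2(q-1)}$ whose order is divisible by an odd prime $p$, which then fails to be $p$-nilpotent and has composite index, excluding $n = 2$ entirely. For $n \geq 3$ we derive the conditions in conclusion (2). First, $q$ must be a power of $2$: if $q$ is odd, one produces a non-$p$-nilpotent maximal subgroup of composite index (for instance a classical $\mathrm{PGL}_2(q)$-type subgroup from Aschbacher's class $\mathcal{C}_8$, or the monomial subgroup from $\mathcal{C}_2$) for every odd $p \mid |S|$. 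Writing $q = 2^d$, the cyclotomic identity $\Phi_n(x^2) = \Phi_n(x)\Phi_{2n}(x)$, valid for odd $n$, shows that $\Phi_n(q) = (q^n-1)/(q-1)$ is composite when $d$ is even, so $d$ must be odd; and $(n, 2^d - 1) = 1$ is simply the statement that $\mathrm{SL}_n(q)$ has trivial centre (so $\mathrm{PSL}_n(q) = \mathrm{SL}_n(q)$, and the diagonal automorphism group vanishes). Going through the remaining Aschbacher classes relevant to $n$ prime --- the Singer normalizer $C_r \rtimes C_n$ from $\mathcal{C}_3$, the monomial wreath $\mathrm{GL}_1(q) \wr \mathrm{Sym}(n)$ from $\mathcal{C}_2$, and the subfield subgroups $\mathrm{PSL}_n(2^{d/s})$ for primes $s \mid d$ from $\mathcal{C}_5$, as compiled in \cite{BHR, Liebeck, Liebeck2} --- one verifies that each is $p$-nilpotent precisely when $p$ is a primitive prime divisor of $2^{d(n-1)} - 1$; crucially, the Singer normalizer $C_r \rtimes C_n$ fails to be $r$-nilpotent, so $p \neq r$ and hence $p$ must lie in the cyclic torus of order $q^{n-1}-1$ inside the hyperplane stabilizer. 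A parallel analysis of the almost simple overgroups $S < G \leq \mathrm{Aut}(S)$, whose outer structure is $C_d \times C_2$ (field and graph automorphisms), exhibits extension-field or $\mathrm{P}\Sigma\mathrm{L}$-type maximal subgroups that violate the hypothesis under the above conditions, forcing $G = S$ throughout conclusion (2).

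The principal obstacle is the last step: a systematic class-by-class verification, with careful use of Zsigmondy's theorem to pinpoint $p$ as a primitive prime divisor of $2^{d(n-1)}-1$, that the dichotomy ``$p$-nilpotent or prime index'' can be fulfilled for every maximal subgroup of $\mathrm{PSL}_n(2^d)$ precisely in the configuration of conclusion (2), together with the parallel control of the outer-automorphism extensions that forces $G = S$.
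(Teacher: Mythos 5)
Your skeleton coincides with the paper's: Lemma \ref{maximal} produces a non-$p$-nilpotent maximal subgroup not containing $S$, its prime index feeds into Guralnick's classification, and the alternating, Mathieu and ${\rm PSL}_2(11)$ cases are dispatched by exhibiting explicit maximal subgroups of composite index that fail $p$-nilpotency. Two small slips there: in $M_{23}$ the Frobenius group $C_{23}\rtimes C_{11}$ \emph{is} $11$-nilpotent (its normal Hall $11'$-subgroup is $C_{23}$), so the prime $11$ must be covered by a different maximal subgroup such as $M_{11}$; and in ${\rm PSL}_2(2^d)$ with $2^d+1$ prime, the single dihedral subgroup $D_{2(q-1)}$ only handles odd $p$ dividing $q-1$ --- for $p=q+1$ you need the other dihedral maximal subgroup $D_{2(q+1)}$ (the Sylow normalizer the paper uses). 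Both are easily repaired. I also note a genuine improvement over the paper: the identity $\Phi_n(x^2)=\Phi_n(x)\Phi_{2n}(x)$ for odd $n$ shows $(q^n-1)/(q-1)$ is composite whenever $q$ is a square, so ``$d$ odd'' follows purely arithmetically from the primality of the index, whereas the paper obtains it by invoking the maximal unitary subgroup ${\rm PSU}_n(s_0).T$.

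The core of the theorem, however --- the case ${\rm PSL}_n(s)$ with $n\geq 3$ prime, from which every condition of conclusion (2) originates --- is only outlined, and you acknowledge this in your closing paragraph. What is missing is exactly the paper's longest stretch of argument: (i) for each candidate odd $p$, a specific maximal subgroup of provably composite index whose forced $p$-nilpotency yields $p\nmid s^i-1$ for all $i\leq n-2$ and $p\neq n$ (the paper uses the $P_2$-parabolic containing ${\rm PSL}_2(s)\times{\rm PSL}_{n-2}(s)$ together with ${\rm SL}_{n-1}(s).T$), which combined with $p\neq r$ from the Frobenius Singer normalizer pins $p$ down as a primitive prime divisor of $s^{n-1}-1$; (ii) the exclusion of odd $s$, which you attribute to the $\mathcal{C}_8$ orthogonal subgroup or the $\mathcal{C}_2$ monomial subgroup ``for every odd $p\mid|S|$'' --- but both of those are $p'$-groups, hence trivially $p$-nilpotent, when $p$ is a primitive prime divisor of $s^n-1$, so this step only functions \emph{after} $p$ has been located in $s^{n-1}-1$, and the logical order matters; and (iii) the case division on whether $G/S$ meets the graph automorphism, which is needed both to know which $X.T$ subgroups are actually maximal in $G$ and to force $G=S$. ``One verifies that each is $p$-nilpotent precisely when $p$ is a primitive prime divisor of $2^{d(n-1)}-1$'' is the statement to be proved, not a proof; as written, the proposal establishes the exclusions and conclusion (1) but not conclusion (2).
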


\begin{proof}
By Lemma \ref{maximal}, we know that $G$ has a maximal subgroup $M$ that is not $p$-nilpotent and does not contain $S$.
 Then, by hypothesis $M$ must have prime index in $G$, say $|G:M|=q$. Also,
 the maximality of $M$ gives $MS=G$, and then $|S:S\cap M|=|G:M|=q$.
  Furthermore, since core$_G(M)=1$, we have $G\leq {\rm Sym}(q)$ and note that $q$ is the largest prime dividing $|G|$.
  Now, taking into account Guralnick's classification, Lemma \ref{g}, we have the following possibilities:

\begin{itemize}

\item[(a)] $S \cong  M_{11}$, $S\cap M\cong  M_{10}$, with $q = 11$; or $S \cong M_{23}$, $S\cap M \cong  M_{22}$ and $q = 23$;

\item[(b)] $S\cong {\rm PSL}_2(11)$, $S \cap M \cong {\rm Alt}(5)$ with $q=11$;

\item[(c)] $S \cong {\rm Alt}(q)$, $S \cap M \cong {\rm Alt}(q-1)$;

\item[(d)] $S \cong {\rm PSL}_n(s)$,  $q = \frac{s^n-1}{s-1} > 3$, where $n$ is some prime.

\end{itemize}

Next, we do a case-by-case analysis to prove that only case (b) with $G=S$ and case (d) with $n>2$ can occur.

\medskip

  For both sporadic groups of case (a)  it is known that Out$(S)=1$, so $G=S$.
  Then, by using the Atlas \cite{Con}, we check  that $M_{11}$ has a maximal subgroup of index $12$ isomorphic to ${\rm PSL}_2(11)$,
   which obviously is not $r$-nilpotent for any prime in $\pi(M_{11})=\{2,3,5,11\}$.
   Hence, in this case $G$ does not satisfy the hypotheses of the theorem.
   Similarly, for $M_{23}$ we find,  among others, maximal subgroups isomorphic to $M_{11}$, ${\rm Alt}(8)$ or $C_{23} \rtimes C_{11}$.
   Then we can ensure that for any prime $r\in \pi(M_{23})=\{2,3,5,7,11,23\}$, the group $M_{23}$ possesses  a maximal subgroup that is not $r$-nilpotent.
   Thus, in this case the hypothesis does not hold either.

  \medskip
   In case (b), Out$(S)\cong C_2$ and then $G={\rm PSL}_2(11)$ or $G={\rm PGL}_2(11)$. In the former case, the maximal subgroups of $G$,  apart from ${\rm Alt}(5)$ of index $11$, are: either isomorphic to $C_{11}\rtimes C_5$, which is not $11$-nilpotent; or isomorphic to $D_{12}$, which is not $3$-nilpotent. Both types of maximal subgroups have composite index in $G$, so this case can be excluded  for $p= 3$ and $11$. We remark that this argument cannot be applied  to the prime $p=5$. In fact, every maximal subgroup of $G$ is $5$-nilpotent or has prime index, so this case gives rise to  the group appearing in case (i) of the theorem.
   Assume now that $G={\rm PGL}_2(11)$. Then the maximal subgroups of  $G$ are isomorphic to $C_{11}\rtimes C_{10}$, $D_{20}$, $D_{24}$ or ${\rm Sym}(4)$.
   But none of them has prime index, and for each prime $r\in \pi(G)$, at least one of them is not $r$-nilpotent. Therefore, this case does not meet the hypotheses, and is excluded as well.

\medskip

  In case (c), we have Out$(S)\cong C_2$ and $G={\rm Alt}(q)$ or $G={\rm Sym}(q)$.
   First of all we  show that  ${\rm Alt}(q)$ possesses a maximal subgroup that neither has prime index nor is $p$-nilpotent,  contradicting the hypothesis.
     Suppose first that $q\geq 7$ and let us consider $T=({\rm Sym}(q-2)\times {\rm Sym}(2))\cap {\rm Alt}(q)$, the twisted symmetric subgroup on $q-2$ letters.
    This subgroup of ${\rm Alt}(q)$ is maximal (see \cite{Liebeck}) and is isomorphic to ${\rm Sym}(q-2)$.
    Hence, $T$ is not $r$-nilpotent for any prime $r<q$. In particular, $T$ is not $p$-nilpotent whenever $p<q$.
    Moreover, $T$, which has order $(q-2)!$, has index $q(q-1)/2$ in ${\rm Alt}(q)$, which obviously is not a prime number.
    Therefore,  the case $p<q$ is finished. It remains to study the case $p=q$, that is, we must show that ${\rm Alt}(q)$ possesses a non-$q$-nilpotent maximal subgroup whose index is not prime.
    It is enough to consider a maximal subgroup $M$  containing ${\bf N}_{{\rm Alt}(q)}(Q)$, where $Q\in {\rm Syl}_q({\rm Alt}(q))$.
     Certainly $M$ has not prime  index, because a maximal subgroup  of prime index must have index $q$.
     Next we see that it cannot be $q$-nilpotent either. Otherwise  ${\bf N}_{{\rm Alt}(q)}(Q)$ would be $q$-nilpotent too.
     However, being $q$ the largest prime dividing $|{\rm Alt}(q)|$, it is known that ${\bf N}_{{\rm Alt}(q)}(Q)\cong C_q\rtimes C_{\frac{q-1}{2}}$, which is not $q$-nilpotent, so we have a contradiction.
   Finally, suppose $q=5$.
   Then ${\rm Alt}(5)$ has maximal subgroups  of index $10$, isomorphic to ${\rm Sym}(3)$, which are not $3$-nilpotent,
    and besides has maximal subgroups of index $6$, isomorphic to $D_{10}$, which are not $5$-nilpotent.
    Therefore, the case ${\rm Alt}(q)$  for every prime $q\geq 5$ is discarded.

\medskip
  We study now the case ${\rm Sym}(q)$.
   Assume first that $q\geq 7$.
   It is enough to consider the maximal subgroup ${\rm Sym}(q-2)\times {\rm Sym}(2)$ (see again \cite{Liebeck}), which is not $r$-nilpotent for every prime $r<q$,
    so in particular, is not $p$-nilpotent if $p<q$, and of course, has not prime index.
    For  $p=q$, we  proceed as above with ${\rm Alt}(q)$, by taking a maximal subgroup of ${\rm Sym}(q)$ containing  ${\bf N}_{{\rm Sym}(q)}(Q)$, with
$Q\in {\rm Syl}_q({\rm Sym}(q))$.
 Then we have ${\bf N}_{{\rm Sym}(q)}(Q)\cong C_q\rtimes C_{q-1}$, which is not $q$-nilpotent,
 so similarly as above we get a contradiction.
  Finally, if $q=5$,  then ${\rm Sym}(5)$ has maximal subgroups  isomorphic to ${\rm Sym}(3)\times {\rm Sym}(2)$, which are not $3$-nilpotent,
   and maximal subgroups  isomorphic to $C_5\rtimes C_4$, which are not $5$-nilpotent.
     Moreover, none of them has prime index. We conclude that ${\rm Sym}(q)$ does not satisfy the hypothesis for every prime $q\geq 5$, as required.

  \medskip
 Finally, we analyze case (d). Recall that the order of the Singer cycle of ${\rm PSL}_n(s)$ is equal to $$\frac{s^n-1}{(n,s-1)(s-1)}=\frac{q}{(n,s-1)},$$ and it is an integer.
 The fact that $q$ is prime yields two possibilities: either $(n,s-1)=q$ or $1$. If $(n,s-1)=q$, then  $q|(s-1)$.
 But $q=(s^n -1)/(s-1)=s^{n-1}+ \ldots +s+1$,  and this certainly leads to a contradiction.  Therefore,  $(n,s-1)=1$. Write $s=r^d$, where $r$ is  a prime and $d$ is a positive integer.
  We distinguish two cases: $n=2$ and  $n\geq 3$.

\medskip
 Assume first that $n=2$, so $r^d+1=q$. By elementary number theory, it follows that $r= 2$, so $S={\rm PSL}_2(2^d)$.
Let $Q$ be a Sylow $q$-subgroup of $S$ (hence, of $G$ too since $G\leq {\rm Sym}(q)$).
 Then the Frattini argument gives $G={\bf N}_S(Q)S$.
 Since $Q$ is not normal in $G$, there exists a maximal  subgroup $L$ of $G$ containing ${\bf N}_G(Q)$, which obviously does not contain $S$.
 Now, by Lemma \ref{lemma2}(2) and (3), we observe that ${\bf N}_S(Q)$ is not $p$-nilpotent (for any odd prime $p$) and hence, $L$ cannot be $p$-nilpotent either.
 Then, according to the hypothesis  $|G:L|$ should be strictly less  than $q$ (prime number), contradicting the fact that $G\leq {\rm Sym}(q)$. This contradiction excludes case $n=2$.

\medskip
  For the remaining cases, that is, when $n\geq 3$, we observe that, since $n\neq 2,6$,
  then  $q$ is a primitive prime divisor of $s^n-1$.
  Therefore, if $Q$ is a Sylow $q$-subgroup of $S$ (and of $G$), by \cite[Theorem 2.7.3]{Hup}, we have that ${\bf N}_S(Q)=Q\rtimes C_n$ is a Frobenius group (not $q$-nilpotent).
   As a consequence, if $p=q$, then  the hypothesis implies that every maximal subgroup of $G$ containing ${\bf N}_S(Q)$ should have prime index (less than $q$),
    but  this is impossible.
    Consequently, in the following, we will assume  $p\neq q$.

 Since $(n,s-1)=1$, we recall that Out$({\rm PSL}_n(r^d))$ is isomorphic to $[C_d]C_2 $
  and is generated  by  the field automorphism $\phi$ of order $d$ and the graph automorphism $\gamma$ of order $2$.
  It is easy to see that $p\neq r$.
  Assume  first that $G/S$ is  not contained in $\langle \phi \rangle$.
  Then there is some conjugate of $\gamma$ in $G/S$. Without loss of generality, we may assume that $\gamma\in G/S$.
  Assume first that $p=n$.
   By \cite[Tables]{BHR} and \cite[Proposition 4.1.17]{Liebeck2}, we have that $G$ has a maximal subgroup ${\rm SL}_{n-1}(s).T$, where $T=(G/S)\cap {\rm Out}({\rm PSL}_n(s))$.
    Note that $|G:{\rm SL}_{n-1}(s).T|$ is not a prime and  $p$ divides $|{\rm SL}_{n-1}(s)|$, so ${\rm SL}_{n-1}(s)$ should be $p$-nilpotent, a contradiction.
     Hence, we can assume $p\neq n$. Next we prove that $p$ is a primitive prime divisor of $s^n-1$. On the contrary, assume that
      $p\mid (s^i-1)$ for some $1\leq i\leq n-1$.
     By \cite[Tables]{BHR} and \cite[Proposition 4.1.17]{Liebeck2}, ${\rm SL}_{n-1}(s).T$ is maximal in $G$, where $T=(G/S)\cap {\rm Out}({\rm PSL}_n(s))$.
     It is easy to see that the index of ${\rm SL}_{n-1}(s).T$ in $G$ is not a prime.
      Hence ${\rm SL}_{n-1}(s).T$ should be $p$-nilpotent, in particular, ${\rm SL}_{n-1}(s)$ should be too. As $p$ divides $|{\rm SL}_{n-1}(s)|$, this certainly gives a contradiction.
      Henceforth, we will assume that $p$ is a primitive prime divisor of $s^n-1$.

\
By \cite[Tables]{BHR} and \cite[Proposition 4.1.17]{Liebeck2}, we know that $(\frac{s^n-1}{s-1}:n).T$ is maximal in $G$, where $T=(G/S)\cap {\rm Out}({\rm PSL}_n(s))$.
It is easy to see that $|G:(\frac{s^n-1}{s-1}:n).T|$ is not a prime, and consequently $(\frac{s^n-1}{s-1}:n).T$ is $p$-nilpotent. Thus $\frac{s^n-1}{s-1}:n$ should be $p$-nilpotent too.
 However, by \cite[Theorem 2.7.3]{Hup},   we have that $(\frac{s^n-1}{s-1})_p:n$ is a Frobenius group, where $(\frac{s^n-1}{s-1})_p$ is the Sylow $p$-subgroup of $\frac{s^n-1}{s-1}$,
  so we get a contradiction.
   By \cite[Tables]{BHR} and \cite[Proposition 4.1.17]{Liebeck2} again, we also can get that ${\rm SL}_{n-1}(s)$ is $p$-nilpotent and this leads to a contradiction.

 Therefore we can assume that   $G/S$ is  contained in $\langle \phi \rangle$.
     Then by \cite[Tables]{BHR} and \cite[Proposition 4.1.17]{Liebeck2},
      we know that $S$ has a maximal  parabolic subgroup $N_1$ of type $P_2$ which is invariant under $\phi$,  such that ${\rm PSL}_2(s)\times {\rm PSL}_{n-2}(s)\leq N_1$.
      Then, $N_1\langle \phi \rangle\cap G$ is maximal in $G$.  It is easy to see that
$$\frac{(s^n-1)(s^{n-1}-1)}{(n,s-1)(s^2-1)}\mid |G:N_1\langle \phi\rangle\cap G |.$$
 In particular, this index  is not a prime.
 Thus, by hypothesis, $N_1$ should be $p$-nilpotent, and hence,   ${\rm PSL}_{n-2}(s)$ should be $p$-nilpotent too.
  Therefore, if  $p$ divides $|{\rm PSL}_{n-2}(s)|$, we clearly obtain a contradiction.
  We can assume then that $p$ does not  divide $|{\rm PSL}_{n-2}(s)|$, in particular, $p$ does not divide $s^{i}-1$  for any $1\leq i\leq n-2$ either.
  As $p$ does not divide $(s^n-1)/(s-1)=q$, it follows that $p$ is a primitive prime divisor of $s^{n-1}-1$.
   Assume that $s$ is odd. By \cite[Tables]{BHR} and \cite[Proposition 4.1.17]{Liebeck2}, we have that $G$ has a maximal subgroup of the form ${\rm PSO}_n(s).T$,
   where $T=(G/S)\cap \langle\phi \rangle$.
   It is easy to see that the index of ${\rm PSO}_n(s).T$ in $G$ is not a prime and $p$ does not divide $|{\rm PSO}_n(s)|$. We can get a contradiction as above.
   Hence $s$ is power of 2, say $s=2^t$. Assume that $t$ is even. By \cite[Tables]{BHR} and \cite[Proposition 4.1.17]{Liebeck2} again,
    we have that $G$ has a maximal subgroup of form ${\rm PSU}_n(s_0).T$, where $s=s_0^2$ and $T=(G/S)\cap {\rm Out}({\rm PSL}_n(s))$.
    Similarly, we can also get a contradiction.
    Therefore,  we deduce that $t$ is odd.
    Finally, we prove that $(G/S)\cap \langle \phi \rangle=1$, that is $G=S$ is a nonabelian simple group.
    If $(G/S)\cap \langle \phi \rangle >1$, then we know that ${\rm  SL}_{n-1}(s).T$ is maximal in $G$ again by \cite[Tables]{BHR} and \cite[Proposition 4.1.17]{Liebeck2},
     where $T=(G/S)\cap \langle \phi \rangle$.
    It is easy to see that $|G:{\rm  SL}_{n-1}(s).T|$ is not a prime.
     It follows that ${\rm  SL}_{n-1}(s).T$ is $p$-nilpotent, so is ${\rm SL}_{n-1}(s)$.
     Since $p$ divides $|{\rm  SL}_{n-1}(s)|$, we get a contradiction.
     Hence $G=S\cong {\rm PSL}_n(2^d)$, where $d\geq1$ is an odd number.
      Moreover, as we said at the beginning of proof of case (d), we know that $\frac{s^n-1}{s-1}=q$ is a prime.
      Therefore, we obtain all  the conditions of case $(2)$ of the statement.
 \end{proof}

We are ready to prove Theorem C.

\begin{proof}[Proof of Theorem C]
Let  $\overline{G}:=G/S_p(G)> 1$. Assume first that every maximal subgroup of $\overline{G}$ is $p$-nilpotent.
 Of course, $\overline{G}$ is not.  Then, by a theorem of It\^o,  quoted in the Introduction, it is known that minimal non-$p$-nilpotent groups are minimal non-nilpotent groups,
  so in particular are solvable. But this leads to the $p$-solvability of $G$,  contradicting our hypothesis.
  Consequently, we can assume that $\overline{G}$ has a maximal subgroup $\overline{M_1}$ that is not $p$-nilpotent, so $M_1$ has prime index in $G$.
   On the other hand, since $\overline{G}$ is non-$p$-solvable, not every maximal subgroup of $\overline{G}$ can have prime index,  otherwise $\overline{G}$ would be supersolvable,
    which provides a contradiction too.
    Thus, if $\overline{M_2}$ is one of such subgroups, then  again by hypothesis, $M_2$ must be $p$-nilpotent.
    We  conclude that at least both, a maximal subgroup of prime index and a $p$-nilpotent maximal subgroup  exist in $G$, and both subgroups contain $S_p(G)$.
    Henceforth, and without loss of generality, we will assume   $S_p(G)=1$.

\medskip
  We claim that $G$ has exactly one minimal normal subgroup, say $N$.
   Assume on the contrary that there is another minimal normal subgroup $K$ of $G$. Since $S_p(G)=1$, we have that $K$ is non-$p$-solvable.
    Now, take $H$ to be a $p$-nilpotent maximal subgroup of $G$, which we know that exists.
    Since $H$ cannot contain $N$, then $G=NH$.
    On the other hand, we have $K\cap N=1$, and then
 $$K\cong KN/N\leq G/N\cong H/(N\cap H).$$
But this implies that $K$ is $p$-nilpotent, a contradiction. Therefore, the claim is proved.

\medskip
   Write $N=S_1\times \cdots \times S_n$, with $n\geq 1$ where $S_i$ are isomorphic  nonabelian simple groups.
     By Lemma \ref{maximal}, we can assert that $G$ possesses at least a non-$p$-nilpotent maximal subgroup $M$ not containing $N$.
 Thus, by hypothesis $M$ has prime index in $G$, say $|G:M|=q$, so $G/{\rm core}_G(M)$ is isomorphic with a subgroup of ${\rm Sym}(q)$.
  Now, the uniqueness of $N$ and the fact that $N\not \leq M$ imply that core$_G(M)=1$.
   As a consequence, $q$ is the largest prime dividing $|G|$ and $|G|_q=q$. Since $|G:M|=|N:N\cap M|$, we deduce that $q$ divides $|N|$,
    so  $n=1$, that is, $N$ is simple and has order divisible by $p$.
  Furthermore,  $G$ is almost simple, that is, $N\leq G\leq {\rm Aut}(N)$.
   Accordingly, we can apply Theorem \ref{almostsimple} to get the thesis of the theorem.
   \end{proof}

\begin{example}
An easy example of a non-simple group meeting the hypotheses of Theorem C(i) is the central extension of ${\rm PSL}_2(11)$, that is,  ${\rm SL}_2(11)\cong 2.{\rm PSL}_2(11)$.
Examples of  groups in case (i) whose $5$-solvable radical is non-central are easily constructed.
It suffices to consider the direct product $G\times H$, where $G\cong {\rm SL}_2(11)$ or ${\rm PSL}_2(11)$,
and $H$ is any nonabelian supersolvable group of order relatively prime to $|G|$, that is, to $\{2,3,5,11\}$.
On the other hand, not all groups $G$ satisfying $G/S_5(G)\cong {\rm PSL}_2(11)$ have to meet the hypotheses of this theorem.
The direct product ${\rm PSL}_2(7)\times {\rm PSL}_2(11)$ shows it.

 Most importantly, the list of simple groups given in Theorem C(ii) is not exhaustive.
Depending on the value of $p$, there exist distinct (projective special linear) simple groups whose maximal subgroups are either $p$-nilpotent or have prime index.
 For instance for $p=3$, the group ${\rm PSL_3}(2)$ satisfies (ii).
  For $p=5$, we already know that of ${\rm PSL}_2(11)$ satisfies our conditions.
   Moreover, the maximal subgroups of ${\rm PSL}_5(2)$ either have index $31$, or are isomorphic to $C_{31}\rtimes C_5$, which is $5$-nilpotent,
    or are $5'$-groups, and hence, are $5$-nilpotent too (see \cite{Con}).
     Thus, ${\rm PSL}_5(2)$ also satisfies our assumptions for $p=5$.

We can  illustrate another type of examples for primes $p\neq 3,5$. Take $p=19$ and the group ${\rm PSL}_3(q)$ with $q= 2^9$.
According to  \cite[Table 8.3]{BHR},  the  maximal subgroups of ${\rm PSL}_3(q)$ are isomorphic to either  $\hat{ }$ $[q^2]:{\rm GL}(2,q)$,
 which has index prime (precisely $q^2+q +1 =262657$), or are isomorphic to $\hat{ }$ $(q-1)^2: {\rm Sym}(3)$,  $\hat{}$ $(q^2+q+1).3$ or ${\rm PSL}_3(2^3)$,
  all of which trivially  have normal $19$-complement because they are $19'$-subgroups.
  Similar examples can be constructed for the family of groups ${\rm PSL}_n(q)$ by taking appropriate values of $n$, $q$ and  the prime $p$, whenever $(q^n-1)/(q-1)= p$.
\end{example}

We finish this section by proving Theorem D.

\begin{proof}[Proof of Theorem D]
  As $G$ is $p$-solvable we will use the upper $p'p$-series of $G$ to compute $l_p(G)$.
  We can assume that $K={\bf O}_{p',p,p'}(G)<G$, otherwise $l_p(G)\leq 1$ and we are finished.
  Let $\overline{G}=G/K\neq 1$.
   Notice that every maximal subgroup $\overline{M}$ of $\overline{G}$ satisfies that $M$ is not $p$-nilpotent  because $M$ contains $K$, which is not $p$-nilpotent, so by hypothesis,
    $|G:M|$ is prime.
    We can assert then that all maximal subgroups of $\overline{G}$ have prime index,
    and this is well known to be equivalent to  $\overline{G}$ being supersolvable \cite[Theorem 9.4.4]{Robinson}.
     In particular, $\overline{G}/F(\overline{G})$ is abelian, where $F(\overline{G})$ denotes the Fitting subgroup of $\overline{G}$. However,
  $$F(\overline{G})\leq {\bf O}_p(\overline{G})\times {\bf O}_{p'}(\overline{G})= {\bf O}_{p}(\overline{G}),$$
   so we have $F(\overline{G})= {\bf O}_{p}(\overline{G})=
\overline{{\bf O}_{p',p,p',p}(G)}$.
 As a consequence, we obtain that $G/{\bf O}_{p',p,p',p}(G)$ is abelian.
 This forces  $G/{\bf O}_{p',p,p',p}(G)$ to be a $p'$-group (possibly trivial), or equivalently, ${\bf O}_{p',p,p',p,p'}(G)=G$. Thus, $l_p(G)\leq 2$ and the proof is finished.
\end{proof}

\begin{remark}
As in Remark 3.2, we can employ \cite[Satz VI.6.6(a)]{Hup} to get an immediate consequence of  Theorem D:
 the nilpotency class of the Sylow $p$-subgroups of a group satisfying the hypotheses is at most $2$.
\end{remark}

\begin{example}  The $p$-length of $G$ in Theorem D cannot be further reduced. Let $G=C_3^3\rtimes {\rm Alt}(4)$, with ${\rm Alt}(4)$ acting faithfully on $C_3^3$.
Indeed, $G={\sf SmallGroup}(324,160)$ in the {\sf SmallGroups} Library of GAP \cite{GAP}.
This group satisfies $l_3(G)=2$  and has exactly three types of maximal subgroups.
Two of them correspond, respectively, to subgroups isomorphic to ${\rm Alt}(4)$  and the wreath product $C_3\wr C_3$, and both are $3$-nilpotent;
and the third type is a (normal) maximal subgroup isomorphic to $C_3^2\rtimes D_{12}$ ({\sf SmallGroup}(108,40)) where $D_{12}$ is acting via $D_{12}/C_3\cong C_2^2$.
This maximal subgroup is not $3$-nilpotent but has index $3$ in $G$. Accordingly, $G$ satisfies the conditions of Theorem D.
\end{example}

\begin{remark} The class of groups whose maximal subgroups are $p$-nilpotent or have prime index (for an arbitrary prime $p$) include, of course, $p$-nilpotent groups (and thus $p'$-groups),
 and also minimal non-$p$-nilpotent groups and supersolvable groups (in which every maximal subgroup has prime index).
 A minimal non-$p$-nilpotent group is a Schmidt group, and it easily turns out that ${\bf O}_{p,p'}(G)=1$, so have $p$-length $1$.
 On the other hand, the fact that $G/F(G)$ is abelian whenever $G$ is a supersolvable group leads to ${\bf O}_{p',p,p',p}(G)=G$ for every prime $p$   (including $p=2$).
 Thus, all supersolvable groups have $p$-length at most $2$.
We also stress, however,  that the converse statement of Theorem D is not true, i.e.,  not every $p$-solvable group of $p$-length less than  or equal to $2$ verifies the hypotheses of this theorem.
 The easiest example is ${\rm Sym}(4)$, with $l_3(G)=1$, which has non-$3$-nilpotent maximal subgroups, isomorphic to  ${\rm Sym}(3)$, of index $4$.
\end{remark}

\noindent
{\bf Acknowledgements}
This work is supported by the National Nature Science Fund of China (No. 12471017 and No. 12071181) and the  first named author is  also supported by Generalitat Valenciana, Proyecto CIAICO/2021/193. The  second  named author is also supported by the  Natural Science Research Start-up Foundation of Recruiting Talents of Nanjing University of Posts and Telecommunications (Grant Nos. NY222090, NY222091).

\medskip
\noindent
{\bf Data availability} Data sharing not applicable to this article as no data sets were generated or analyzed during
the current study.

\bigskip
\noindent
{\bf \large Declarations}

\medskip
\noindent
{\bf Conflict of interest} The authors have no conflicts of interest to declare.

\bibliographystyle{plain}

\end{document}